\newtheorem{thm}{Theorem}[section]
\newtheorem{lem}[thm]{Lemma}
\newtheorem{cor}[thm]{Corollary}
\newtheorem{conj}[thm]{Conjecture}
\newtheorem{prop}[thm]{Proposition}
\newtheorem{question}[thm]{Question}
\theoremstyle{remark}
\theoremstyle{definition}
\newtheorem{defn}[thm]{Definition}
\numberwithin{equation}{section}
\DeclareMathOperator{\rank}{rank}
\DeclareMathOperator{\Pic}{Pic}
\DeclareMathOperator{\pr}{Pr}
\DeclareMathOperator{\CH}{CH}
\DeclareMathOperator{\DIV}{div}
\DeclareMathOperator{\IM}{Im}
\DeclareMathOperator{\kod}{kod}
\DeclareMathOperator{\tr}{tr}
\DeclareMathOperator{\Alt}{Alt}
\DeclareMathOperator{\sq}{i}
\DeclareMathOperator{\cd}{cd}
\begin{document}

\vfuzz0.5pc
\hfuzz0.5pc 

\newcommand{\claimref}[1]{Claim \ref{#1}}
\newcommand{\thmref}[1]{Theorem \ref{#1}}
\newcommand{\propref}[1]{Proposition \ref{#1}}
\newcommand{\lemref}[1]{Lemma \ref{#1}}
\newcommand{\coref}[1]{Corollary \ref{#1}}
\newcommand{\remref}[1]{Remark \ref{#1}}
\newcommand{\conjref}[1]{Conjecture \ref{#1}}
\newcommand{\questionref}[1]{Question \ref{#1}}
\newcommand{\defnref}[1]{Definition \ref{#1}}
\newcommand{\secref}[1]{Sec. \ref{#1}}
\newcommand{\ssecref}[1]{\ref{#1}}
\newcommand{\sssecref}[1]{\ref{#1}}

\def \red{{\mathrm{red}}}
\def \tors{{\mathrm{tors}}}
\def \EQ{\Leftrightarrow}

\def \mapright#1{\smash{\mathop{\longrightarrow}\limits^{#1}}}
\def \mapleft#1{\smash{\mathop{\longleftarrow}\limits^{#1}}}
\def \mapdown#1{\Big\downarrow\rlap{$\vcenter{\hbox{$\scriptstyle#1$}}$}}
\def \smapdown#1{\downarrow\rlap{$\vcenter{\hbox{$\scriptstyle#1$}}$}}
\def \A{{\mathbb A}}
\def \I{{\mathcal I}}
\def \J{{\mathcal J}}
\def \CO{{\mathcal O}}
\def \C{{\mathcal C}}
\def \BC{{\mathbb C}}
\def \BQ{{\mathbb Q}}
\def \M{{\mathcal M}}
\def \H{{\mathcal H}}
\def \Ll{{\mathcal L}}
\def \S{{\mathcal S}}
\def \Z{{\mathcal Z}}
\def \BZ{{\mathbb Z}}
\def \W{{\mathcal W}}
\def \Y{{\mathcal Y}}
\def \T{{\mathcal T}}
\def \P{{\mathbb P}}
\def \CP{{\mathcal P}}
\def \G{{\mathbb G}}
\def \F{{\mathbb F}}
\def \BR{{\mathbb R}}
\def \D{{\mathcal D}}
\def \L{{\mathcal L}}
\def \f{{\mathcal F}}
\def \E{{\mathcal E}}
\def \BN{{\mathbb N}}
\def \N{{\mathcal N}}
\def \X{{\mathcal X}}
\def \CA{{\mathcal A}}
\def \rat{{\text{rat}}}
\def \dgt{{\text{dgt}}}

\def \closure#1{\overline{#1}}
\def \EQ{\Leftrightarrow}
\def \imply{\Rightarrow}
\def \isom{\cong}
\def \embed{\hookrightarrow}
\def \tensor{\mathop{\otimes}}
\def \wt#1{{\widetilde{#1}}}

\title[Real Regulators on Self-products of $K3$ Surfaces]{Real Regulators on Self-products of $K3$ Surfaces}

\author{Xi Chen}

\address{632 Central Academic Building\\
University of Alberta\\
Edmonton, Alberta T6G 2G1, CANADA}

\email{xichen@math.ualberta.ca}   

\author{James D. Lewis}

\address{632 Central Academic Building\\
University of Alberta\\
Edmonton, Alberta T6G 2G1, CANADA}

\email{lewisjd@ualberta.ca}

\date{\today}

\thanks{Both authors partially supported by a grant from the Natural Sciences
and Engineering Research Council of Canada.}

\keywords{Regulator, Chow group, $K3$ surface}

\subjclass{Primary 14C25; Secondary 14C30, 14C35}

\renewcommand{\abstractname}{Abstract} 
\begin{abstract}  
Based on a novel application 
of an archimedean type pairing to the geometry and
deformation theory of $K3$ surfaces, we construct a regulator indecomposable
$K_1$-class on a self-product of a $K3$ surface. In the Appendix,
we explain how this 
pairing is a special instance of a general  pairing
on precycles in the equivalence relation defining Bloch's
higher Chow groups.
\end{abstract}

\maketitle

\section{Introduction}

Let $X$ be a smooth projective surface. The real regulator map
\begin{equation}\label{E200}
r_{2,1}: \CH^2(X, 1)\to H^{1,1}(X, \BR),
\end{equation}
where $\CH^{\bullet}(-,\bullet)$ are the higher Chow groups defined
in \cite{B},  has been extensively studied. 
The image $\IM(r_{2,1})\tensor \BR$ seems
to behave according to the Kodaira dimension $\kod(X)$ of $X$. Our
knowledge on the subject suggests the following:
\begin{enumerate}
\item if $\kod(X)\le 0$, $\IM(r_{2,1})\tensor \BR = H^{1,1}(X, \BR)$;
this is trivial when $\kod(X) < 0$; for $\kod(X) = 0$, 
a proof is given in \cite{C-L2} for $K3$ and Abelian surfaces;
\item if $\kod(X) > 0$, we expect that
\begin{equation}\label{E201}
\IM(r_{2,1})\tensor \BR\cap H_{\text{tr}}^{1,1}(X, \BR) = \{0\}
\end{equation}
for $X$ general, where $H_\text{tr}^{1,1}(X, \BR)\subset H^{1,1}(X,
\BR)$ is the space of transcendental classes ($=$ orthogonal
complement of algebraic classes); this is known in some special cases such as
surfaces in $\P^3$ and products of curves \cite{C-L1}.
\end{enumerate}

In this paper, we give some evidence that the real regulator
\begin{equation}\label{E001}
r_{3,1}: \CH^3(X\times X, 1) \to H^4(X\times X, \BR)
\end{equation}
on the self product of a smooth projective surface $X$ exhibits a
similar pattern of behavior. In particular, we have the following result for
a general polarized $K3$ surface.

\begin{thm}\label{THM001}
For a general polarized $K3$ surface $(X, L)$, 
\begin{equation}\label{E334}
\IM(\underline{r}_{3,1})\tensor \BR \ne 0
\end{equation}
where $\underline{r}_{3,1}$ is the reduced real regulator
\begin{equation}\label{E335}
\underline{r}_{3,1}: \CH^3(X\times X, 1) \xrightarrow{r_{3,1}} H^4(X\times X, \BR)
\xrightarrow{\text{(projection)}} V(X)
\end{equation}
and $V(X)$ is the subspace of $H^{1,1}(X,\BR) \tensor H^{1,1}(X,\BR)$ given by
\begin{equation}\label{E003}
\begin{split}
V(X) = \{ &\omega\in H^{1,1}(X,\BR) \tensor H^{1,1}(X,\BR):\
\text{for all } \gamma\in H^{1,1}(X,\BR),\\
&\quad \omega\wedge (c_1(L)\tensor
\gamma) = \omega\wedge (\gamma\tensor c_1(L)) = \omega\wedge [\Delta_X] = 0\}.
\end{split}
\end{equation}
Here $\Delta_X$ is the diagonal of $X\times X$ and $[Y]$ is the
Poincar\'e dual of $Y$.
\end{thm}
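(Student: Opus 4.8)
The plan is to exhibit a single (family of) higher Chow cycle(s) on $X\times X$ whose reduced regulator is not locally constant along a deformation of $(X,L)$, which forces it to be nonzero for the general member. I would build the cycle from rational curves on $X$. Since the general polarized $K3$ carries an abundance of irreducible nodal rational curves (in $|L|$, and indeed in every $|nL|$), every degree-zero divisor on the normalization of such a curve is principal, and one can arrange a configuration of product surfaces $R_i\times R_j\subset X\times X$ together with the diagonal $\Delta_X$, equipped with rational functions $f_k$ (of the shape $g(x)/h(y)$ on the product pieces, and suitable functions on $\Delta_X\cong X$), so that the codimension-three cycle $\sum_k\DIV(f_k)$ cancels; this produces a precycle $\xi=\sum_k(\Gamma_k,f_k)$ defining a class in $\CH^3(X\times X,1)$. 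The diagonal has to enter essentially: the external products $\CH^2(X,1)\boxtimes\Pic(X)$ have reduced regulator zero, since their regulator image lies in $H^{1,1}(X,\BR)\otimes\BR\,c_1(L)$ (for the general $K3$, $\Pic(X)=\BZ L$), which the projection onto $V(X)$ annihilates; so no product cycle alone can yield \eqref{E334}.

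For such a $\xi$, the real regulator $r_{3,1}(\xi)\in H^{2,2}(X\times X,\BR)\subset H^4(X\times X,\BR)$ is represented by the current
\[
\varphi\ \longmapsto\ \frac{1}{2\pi i}\sum_k\int_{\Gamma_k}\log|f_k|\,\varphi
\]
on smooth four-forms $\varphi$, which is $\partial\bar\partial$-closed precisely because $\sum_k\DIV(f_k)=0$ (Poincar\'e--Lelong); pairing it against a harmonic representative of a class in $V(X)$ gives the archimedean pairing whose non-vanishing we are after. On a fixed very general $K3$ surface this pairing is essentially uncomputable and may vanish, so instead I would spread the whole configuration — curves, functions, surfaces $\Gamma_k\subset\mathcal{X}\times_S\mathcal{X}$, and the class in $V$ — over a deformation $\mathcal{X}\to S$ of $(X,L)$, obtaining a regulator normal function $\nu$ over $S$ whose archimedean part is $s\mapsto r_{3,1}(\xi_s)$. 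By the standard mechanism (Griffiths--Green--Voisin infinitesimal invariants, in the higher Chow setting after M\"uller-Stach), a nonzero infinitesimal invariant $\delta\nu$ forces $\nu$, hence its $V(X_s)$-component $\underline{r}_{3,1}(\xi_s)$, to be nonzero for the general $s$; so the theorem reduces to computing $\delta\nu$ and showing that it is nonzero.

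To compute $\delta\nu$ I would differentiate along $\theta\in T_{[X]}S\cong H^1(X,T_X)$ using the Gauss--Manin connection on $H^4(\mathcal{X}\times_S\mathcal{X})$ and Griffiths transversality; since $\theta$ acts on the cohomology of each $K3$ factor only through the infinitesimal period map $H^1(X,T_X)\xrightarrow{\ \sim\ }H^{1,1}_{\mathrm{prim}}(X)$ (contraction with the holomorphic two-form), $\delta\nu(\theta)$ comes out as a residue-type expression built from the $\log|f_k|$-integrals over the $\Gamma_k$ and the Kodaira--Spencer/Yukawa coupling of the $K3$, with boundary terms along $\sum_k\DIV(f_k)$ that are controlled because that cycle is zero. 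One also needs that this value genuinely lands in the deformation of $V(X)$ and not in the decomposable directions: for the general $(X,L)$ the only algebraic classes contributing to the decomposable part of $H^{1,1}(X)\otimes H^{1,1}(X)$ are $c_1(L)\otimes c_1(L)$ and the $(1,1)$-K\"unneth component of $[\Delta_X]$, both orthogonal to $V(X)$ by construction, so $\delta\nu$ projects automatically into the $V$-direction. The main obstacle is then the non-vanishing: I would reduce it to an explicit inequality involving the chosen rational curves, the functions $f_k$, and the $K3$ Yukawa coupling, evaluated against a carefully chosen test class in $V(X)$; since the locus in polarized moduli on which this pairing vanishes identically is a proper analytic subset — it is the zero set of something shown not to be identically zero by the explicit evaluation — the general $(X,L)$ has $\delta\nu\neq 0$, and \eqref{E334} follows. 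I expect the genuinely delicate steps to be the precise bookkeeping of the boundary terms in the variational formula and the construction of an honest test class in $V(X)$ on which the coupling is provably nonzero — which is exactly where the novel use of the archimedean pairing in $K3$ deformation theory has to do its work.
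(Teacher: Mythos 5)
Your construction of the cycle is in the right spirit and matches the paper's: the class there is $\xi = (f_C, C\times C) + (f_D, D\times D) + (f_\Delta, \Delta_X) + \eta$ built from two rational curves $C\in|nL|$, $D\in|mL|$ meeting at a point $p$, with divisor relations arranged so that everything cancels, and your observation that the diagonal must enter essentially (since external products land in directions killed by the projection to $V(X)$) is exactly the right one. But the heart of your argument --- the non-vanishing --- is deferred to an unperformed computation, and the route you propose for it is not the paper's and has a concrete obstruction you have not addressed. You plan to evaluate an infinitesimal invariant $\delta\nu$ by pairing against ``a carefully chosen test class in $V(X)$'' on the general fiber. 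For a general polarized $K3$, $\Pic(X)=\BZ L$, so $V(X)$ contains \emph{no} algebraic classes: every algebraic class in $H^{1,1}\otimes H^{1,1}$ is spanned by $c_1(L)\otimes c_1(L)$ and the K\"unneth piece of $[\Delta_X]$, both of which are orthogonal to $V(X)$ by definition. Hence the pairing $\int_{\Gamma_k}\log|f_k|\,\omega$ with $\omega\in V(X)$ is against a purely transcendental class, which is precisely the ``essentially uncomputable'' situation you acknowledge; the Gauss--Manin/Yukawa formalism does not by itself convert this into something evaluable, and your claim that the resulting expression is ``shown not to be identically zero by the explicit evaluation'' is circular as stated.

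The paper's mechanism for getting around this is different and is where all the work lies: it degenerates $X$ to a special $K3$ $W_0$ with Picard rank $3$ carrying two rational curves $M,N$ with $M+N\sim kL$, so that $V(W_0)$ acquires \emph{algebraic} classes (the space $\hat V(X)$ of (2.28)); the family of cycles $\xi_t$ then fails to extend to a higher Chow cycle on the central fiber (the functions $f_\C, f_\D$ acquire zeros and poles along $N\times N$), and after blowing up $W\times_\Gamma W$ along $N\times N$ the limit of $r_{3,1}(\xi_t)$ paired with an algebraic $\omega=[\gamma\otimes\delta]\in\hat V(W_0)$ is computed via the archimedean pairing on $z^*_{\rat}$ and reduces to $\log|h(\gamma)|$, the logarithm of a cross-ratio of four points $r_1,r_2,q_1,q_2$ on $N^\nu\cong\P^1$ (equations (2.63)--(2.73)). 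Non-vanishing is then proved by showing this cross-ratio is non-constant in moduli, via a further degeneration of the $K3$ to a union of two rational surfaces. If you want to salvage your infinitesimal-invariant route you would need an analogous concrete evaluation point; without one, the step ``$\delta\nu\neq 0$'' is a restatement of the theorem rather than a proof of it.
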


One should compare the above theorem with the results in \cite{C-L3}, where it
shows that the same regulator map is trivial on a very general product of
$K3$ surfaces.
The proof of the above theorem is similar to that of the Hodge-$\D$-conjecture for $K3$ surfaces in \cite{C-L2}.
We choose a suitable one-parameter family $W$ of $K3$ surfaces over the unit disk $\Delta = \{|t| <
1\}$ and a family of
higher Chow cycles $\xi_t\in \CH^3(W_t\times W_t, 1)$. By studying the limit $\xi_0$ of $\xi_t$ as $t\to 0$ and the
corresponding limit of the regulator maps $r_{3,1}(\xi_t)$, we are able to deduce \eqref{E334}. However, unlike in 
\cite{C-L2}, where the limit $\xi_0$ is a higher Chow cycle itself, $\xi_0$ here is no longer a cycle in
$\CH^3(W_0\times W_0, 1)$. Needless to say, this makes the computation of $\lim_{t\to 0} r_{3,1}(\xi_t)$ much harder.

In the course of our proof, we discover a natural pairing on $z_\rat^*$, which is interesting
in its own right. This pairing, though
quite easy to define and similar to an archimedean height pairing,
to the best of our knowledge, has not been exploited in this situation.  
An abridged version of this pairing is given in \ref{SS002}. In
the Appendix (\secref{SEC003}), we show how this pairing is
a special instance of a generalized pairing on higher cycles
in the equivalence relation defining Bloch's higher Chow groups (\cite{B}).

We wish to point out that the existence of rational curves
on $K3$ surfaces is pivotal to our construction of higher
Chow cycles. On the contrary we anticipate the following:

\begin{conj}\label{CA3}
Let $X = X/\BC\subset \P^3$ be a very general surface of degree $d\geq 5$.
Then the reduced regulator map
$$
\underline{r}_{3,1} : \CH^3(X\times X,1) \to H_{\tr}^4(X\times X,\BR)
\bigcap H^{2,2}(X\times X),
$$
is zero, where $H_{\tr}^4(X\times X,\BR)$ is the space of transcendental cocycles.
\end{conj}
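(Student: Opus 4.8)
The plan is to prove \conjref{CA3} along the lines of the triviality results of \cite{C-L1} and \cite{C-L3}, the guiding principle being the mirror image of \thmref{THM001}: for $X\subset\P^3$ very general of degree $d\ge5$ the transcendental motive $H^2_{\tr}(X)$ is simple and ``as non-algebraic as possible,'' and $X$ carries no rational or low-genus curves, so the geometric mechanism that produced a nonzero class in \thmref{THM001} has nothing to act on. The first step is to cut the target down to size. Since $H^1(X)=H^3(X)=0$ by Lefschetz, the K\"unneth decomposition reduces the transcendental part of $H^4(X\times X)$ to $H^2_{\tr}(X)\otimes H^2_{\tr}(X)$, which under the polarization is $\Hom(H^2_{\tr}(X),H^2_{\tr}(X))$ up to a Tate twist. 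For $X$ very general $\Hom_{\mathrm{HS}}(H^2_{\tr}(X),H^2_{\tr}(X))=\BQ\cdot\mathrm{id}$, with the identity corresponding to the transcendental K\"unneth component $[\Delta_X]_{\tr}$ of the diagonal. Hence the only rational Hodge class in the transcendental $(2,2)$-part is a multiple of $[\Delta_X]_{\tr}$, and the conjecture splits into: (i) no \emph{indecomposable} $\xi$ can have $\underline r_{3,1}(\xi)$ a nonzero multiple of $[\Delta_X]_{\tr}$ --- note that decomposable classes $Z\otimes\lambda$ do reach $\BR\cdot[\Delta_X]_{\tr}$, which is why they are excluded from the statement --- and (ii) $\underline r_{3,1}$ vanishes on the genuinely non-Hodge transcendental directions, i.e. on the real points of $H^{2,0}\otimes H^{0,2}\oplus H^{1,1}_{\tr}\otimes H^{1,1}_{\tr}\oplus H^{0,2}\otimes H^{2,0}$.

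\textbf{Spreading out and the infinitesimal invariant.} Let $U\subset\P\bigl(H^0(\P^3,\CO_{\P^3}(d))\bigr)$ be the open locus of smooth surfaces. Any $\xi\in\CH^3(X\times X,1)$ with $X=X_0$ very general is defined over a finitely generated field, so it spreads out: there is a smooth quasi-projective $S$ with a dominant map $S\to U$, a point $s_0\in S$ over $X_0$ that we may take to be very general, the pulled-back universal family $\pi\colon\X\to S$, and $\Xi\in\CH^3(\X\times_S\X,1)$ with $\Xi|_{s_0}=\xi$. The transcendental projection of the real regulator of $\Xi_s$ then defines, over $S$, a section $\nu$ of the $C^\infty$ bundle whose fibre at $s$ is the transcendental $(2,2)$-part of $H^4(\X_s\times\X_s,\BR)$; because $\Xi$ carries a Deligne cycle class, $\nu$ is quasi-horizontal for the Gauss--Manin connection of the variation $H^2_{\tr}\boxtimes H^2_{\tr}$. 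This is precisely the setting for an \emph{infinitesimal invariant} $\delta\nu$, living in a Koszul cohomology group attached to the infinitesimal variation of Hodge structure (IVHS) of $\X\times_S\X\to S$; showing $\delta\nu=0$ in the non-diagonal directions, and then globalizing, is the whole game.

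\textbf{The IVHS computation.} The crux --- and the one place where $d\ge5$ is used essentially --- is the vanishing of $\delta\nu$ off the diagonal direction. The IVHS of $H^2_{\tr}$ of a degree-$d$ surface $X=\{F=0\}\subset\P^3$ is governed by the Jacobian ring $R=R_F=\BC[x_0,\dots,x_3]/J_F$: one has $H^{2,0}=R^{d-4}$, $H^{1,1}_{\mathrm{prim}}=R^{2d-4}$, $H^{0,2}=R^{3d-4}$, the tangent space $H^1(X,T_X)=R^d$ acts by multiplication, and $R$ is Artinian Gorenstein with socle in degree $4d-8$, so $R^a\times R^{4d-8-a}\to R^{4d-8}\cong\BC$ is perfect (Macaulay). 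The IVHS of the self-product is the graded tensor square of this data, and I would compute the relevant Koszul cohomology of the tensor-square IVHS via Macaulay duality together with Donagi's symmetrizer lemma and Green's Koszul-vanishing theorems, exactly as in the generic-Torelli analysis of hypersurfaces. These inputs apply in the ``stable range,'' which for surfaces in $\P^3$ is $d\ge5$ --- equivalently $K_X$ ample with $h^0(K_X)=h^0(\CO_X(d-4))\ge4$, i.e. $d-4\ge1$ --- and this is exactly the range in which the symmetrizer lemma and generic Torelli for surfaces in $\P^3$ are proved and, correspondingly, in which the Koszul cohomology carrying $\delta\nu$ is forced to be generated by the $\Hom_{\mathrm{HS}}$-class, i.e. by the diagonal direction alone. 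Independently one needs that a very general surface of degree $d\ge5$ in $\P^3$ contains no rational curves (Clemens, Xu, Voisin); this kills the construction behind \thmref{THM001} and also guarantees that the only decomposable-type contributions to $\underline r_{3,1}$ are the evident real multiples of algebraic cycle classes, which lie outside the conjecture's target.

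\textbf{Globalization and the main obstacle.} Granting $\delta\nu=0$ away from the diagonal direction, one finishes as follows: $\delta\nu=0$ makes $\nu$ infinitesimally flat there, and since the monodromy of $H^2_{\tr}(X)$ over $U$ is Zariski dense in the orthogonal group (Beauville, Deligne), the variation $H^2_{\tr}\boxtimes H^2_{\tr}$ has no proper monodromy-invariant sub-Hodge-structure beyond the diagonal line; a normal-function rigidity argument --- an infinitesimally trivial normal function whose monodromy orbit is Zariski dense and which avoids the invariant part is torsion --- then gives $\nu\equiv0$ modulo the diagonal direction, hence $\underline r_{3,1}(\xi)=0$ in $H^4_{\tr}(X\times X,\BR)\cap H^{2,2}(X\times X)$, and the residual $[\Delta_X]_{\tr}$-direction is excluded because such a $\xi$ would carry the regulator of a decomposable cycle. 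The main obstacle is the IVHS/Koszul computation for the \emph{tensor square}: the symmetrizer lemma and Macaulay duality are tailored to a single hypersurface, and genuine work is needed to isolate the diagonal direction inside the Koszul cohomology of the product, to control how the three summands $H^{2,0}\otimes H^{0,2}$, $H^{1,1}_{\tr}\otimes H^{1,1}_{\tr}$, $H^{0,2}\otimes H^{2,0}$ interleave under the product connection, and to make the stable-range bounds sharp at $d=5$ so that the argument genuinely excludes the K3 case $d=4$ of \thmref{THM001}. A secondary obstacle is globalization when $S\to U$ is only generically finite and $\Xi$ does not extend over all of $U$: one needs a Lefschetz-pencil or monodromy-density argument ensuring the group seen by $\xi$ is still big enough for the rigidity step. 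As an alternative to the IVHS route one could mimic the degeneration method of \cite{C-L1,C-L3} and \thmref{THM001}: degenerate $X$ to a surface $X_0$ over which the limit regulator is computable via the archimedean pairing of \secref{SEC003}, and show that the absence of rational curves on the generic member forces $\lim_{t\to0}\underline r_{3,1}(\xi_t)=0$ --- but, as in \thmref{THM001}, controlling that limit is delicate precisely because the limiting precycle need not remain a higher Chow cycle.
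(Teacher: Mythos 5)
There is nothing in the paper to check your argument against: \conjref{CA3} is stated as an open conjecture and the authors offer no proof of it (they prove only \thmref{THM001}, the opposite-sign statement for $K3$ self-products). So the relevant question is whether your proposal actually constitutes a proof, and it does not --- by your own account. The entire weight of the argument rests on the Koszul-cohomology computation for the infinitesimal variation of $H^2_{\tr}\boxtimes H^2_{\tr}$, and you explicitly defer it (``genuine work is needed to isolate the diagonal direction inside the Koszul cohomology of the product''). The symmetrizer lemma and Macaulay duality, as they exist in the literature, control the IVHS of a single hypersurface; there is no off-the-shelf statement that the relevant Koszul group for the tensor square vanishes off the diagonal line for $d\ge 5$, and asserting that the ``stable range'' bounds carry over is exactly the content of the conjecture, not an input to it. A second unproved step is the rigidity argument: the section $\nu$ you build from the real regulator is a real $C^\infty$ section of the flag of the variation, not a holomorphic normal function, so ``infinitesimally trivial $\Rightarrow$ torsion'' does not apply as stated; one would have to adapt the arguments of \cite{C-L1} (density of monodromy forcing the image into the fixed part) to the self-product variation, which is again nontrivial and not carried out.

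There is also a structural confusion worth flagging. Since $H^4_{\tr}(X\times X,\BR)$ is by the paper's convention the orthogonal complement of the algebraic classes, and the transcendental K\"unneth projector $[\Delta_X]_{\tr}$ is a $\BQ$-combination of algebraic cycle classes that pairs nontrivially with itself, $[\Delta_X]_{\tr}$ does \emph{not} lie in the target of $\underline{r}_{3,1}$ at all. Your step (i) and the closing discussion of the ``residual $[\Delta_X]_{\tr}$-direction'' are therefore vacuous, and the appeal to decomposables to dispose of that direction is a non sequitur. The actual content of the conjecture is entirely your step (ii): killing the image in the real subspace of $H^{2,0}\otimes H^{0,2}\oplus H^{1,1}_{\tr}\otimes H^{1,1}_{\tr}\oplus H^{0,2}\otimes H^{2,0}$, which is precisely the part you have not proved. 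The alternative degeneration route you sketch at the end is closer in spirit to what the authors actually do for \thmref{THM001}, but as you note it faces the same difficulty (the limit precycle need not be a cycle), and the absence of rational curves on a very general $X$ of degree $\ge 5$ only removes one construction of classes; it does not show that no other construction exists. In short: a reasonable research program, consistent with why the authors believe the conjecture, but not a proof.
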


The terminology ``general'' and``very general'' means the following. If $W$ is a
parameter space of a universal family of projective algebraic manifolds of a given class,
then general refers to a point in a nonempty real analytic Zariski open subset of $W$
governed by certain generic properties,
whereas very general refers to a point in the countable intersection of nonempty
Zariski open subsets.

If one works with the notion of indecomposables as for example defined
in \cite{L2}, viz., in our case 
\[
\CH^r_{\text{\rm ind}}(X,1)  :=\ \text{\rm Coker}\big(\CH^1(X,1)\otimes \CH^{r-1}(X)
\ {\buildrel\bigcap\over\longrightarrow}\ \CH^r(X,1)\big),
\]
then \thmref{THM001}  implies the following:

\begin{cor} For a very general polarized $K3$ surface $(X,L)$,
\[
\CH^3_{\text{\rm ind}}(X\times X,1)\otimes\BQ \ne 0.
\]
\end{cor}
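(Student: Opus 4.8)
The strategy is to combine \thmref{THM001} with the vanishing of $\underline{r}_{3,1}$ on the decomposable subgroup of $\CH^3(X\times X,1)$. For any smooth connected projective variety $Y$ one has $\CH^1(Y,1)=H^0(Y,\CO_Y^*)=\BC^*$, so the decomposable subgroup is the image of $\BC^*\otimes\CH^2(X\times X)\to\CH^3(X\times X,1)$; it is generated by products $\{c\}\cdot Z$ with $c\in\BC^*$ and $Z$ a codimension-two cycle on $X\times X$. Since the Beilinson regulator is multiplicative and the class of $c$ in $\CH^1(Y,1)$ is sent to $\log|c|$ under $r_{1,1}\colon\BC^*\to H^1_{\D}(Y,\BR(1))\cong\BR$, we obtain $r_{3,1}(\{c\}\cdot Z)=\log|c|\cdot\cl(Z)$ up to a harmless normalization; this is a real multiple of the algebraic cohomology class $\cl(Z)\in H^{2,2}(X\times X)$. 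Hence it will suffice to prove that, for $(X,L)$ very general, the projection $H^4(X\times X,\BR)\to V(X)$ in \eqref{E335} kills every algebraic class.

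For $(X,L)$ very general we may assume $\mathrm{NS}(X)=\BZ\,c_1(L)$ (Noether--Lefschetz) and $\mathrm{End}_{\mathrm{HS}}(T(X))=\BQ$, where $T(X)=\mathrm{NS}(X)^{\perp}\subset H^2(X)$ is the transcendental sub-Hodge-structure. Because $H^1(X)=H^3(X)=0$, the Künneth formula gives $H^4(X\times X)=\big(H^0(X)\otimes H^4(X)\big)\oplus\big(H^2(X)\otimes H^2(X)\big)\oplus\big(H^4(X)\otimes H^0(X)\big)$, and the two assumptions force the rational Hodge classes in the middle summand to be spanned by $c_1(L)\otimes c_1(L)$ (from $\mathrm{NS}(X)\otimes\mathrm{NS}(X)$) and by the transcendental component of the class of the diagonal (the generator of $\Hom_{\mathrm{HS}}(T(X),T(X))=\BQ$ under $T(X)\otimes T(X)\cong\mathrm{End}(T(X))$ via the polarization). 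Therefore $\mathrm{Hdg}^4(X\times X,\BQ)$ — which here coincides with the group of algebraic classes, by a trivial case of the Hodge conjecture — is spanned by $[X\times\{\mathrm{pt}\}]$, $[\{\mathrm{pt}\}\times X]$, $c_1(L)\otimes c_1(L)$, and $[\Delta_X]$.

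Now let $\omega\in V(X)\subseteq H^{1,1}(X,\BR)\otimes H^{1,1}(X,\BR)$. Then $\omega$ pairs to zero with $[X\times\{\mathrm{pt}\}]$ and with $[\{\mathrm{pt}\}\times X]$ for degree reasons (these lie in $H^0(X)\otimes H^4(X)$ and $H^4(X)\otimes H^0(X)$, whereas $\omega$ lies in $H^2(X)\otimes H^2(X)$, so the products vanish in $H^8$), and it pairs to zero with $c_1(L)\otimes c_1(L)$ and with $[\Delta_X]$ by the very definition \eqref{E003} of $V(X)$ (taking $\gamma=c_1(L)$ in the first relation). Hence $\cl(Z)$ is orthogonal to $V(X)$ for every codimension-two cycle $Z$ on $X\times X$, so its image under the projection of \eqref{E335} is $0$, and $\underline{r}_{3,1}$ annihilates the decomposable subgroup. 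Since ``very general'' implies ``general'', \thmref{THM001} applies to $(X,L)$ and produces $\xi\in\CH^3(X\times X,1)$ with $\underline{r}_{3,1}(\xi)\ne0$; as $\underline{r}_{3,1}$ then factors through $\CH^3_{\text{\rm ind}}(X\times X,1)\otimes\BR=\big(\CH^3_{\text{\rm ind}}(X\times X,1)\otimes\BQ\big)\otimes_{\BQ}\BR$, the class of $\xi$ is already nonzero in $\CH^3_{\text{\rm ind}}(X\times X,1)\otimes\BQ$, proving the corollary.

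The main obstacle is the identification of $\mathrm{Hdg}^4(X\times X,\BQ)$ for $(X,L)$ very general together with the check that each generator is orthogonal to $V(X)$. This is exactly where the hypothesis is strengthened from ``general'' in \thmref{THM001} to ``very general'' in the corollary: one needs both $\mathrm{NS}(X)=\BZ\,c_1(L)$ and $\mathrm{End}_{\mathrm{HS}}(T(X))=\BQ$, conditions that hold only away from countably many proper subvarieties of the moduli of polarized $K3$ surfaces. Some care is also required with the normalization of $r_{3,1}$ on decomposable classes and with the precise meaning of the projection in \eqref{E335}.
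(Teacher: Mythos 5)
Your proof is correct and is exactly the intended (but unwritten) derivation: the paper asserts the corollary as an immediate consequence of \thmref{THM001}, the implicit point being precisely that a decomposable class $\{c\}\cdot Z$ has regulator image $\log|c|\cdot\mathrm{cl}(Z)$, a real multiple of an algebraic class, and that for $\mathrm{NS}(X)=\BZ\, c_1(L)$ and $\mathrm{End}_{\mathrm{HS}}(T(X))=\BQ$ every algebraic class in $H^4(X\times X,\BQ)$ is annihilated by the projection to $V(X)$. The one elision (which the paper itself also makes) is that with the stated definitions ``very general'' does not formally imply ``general,'' since a proper real-analytic closed subset need not lie in a countable union of proper Zariski-closed subsets; the corollary should therefore be read as holding for $(X,L)$ avoiding both bad loci.
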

 
\section{Real Regulators on Self-products of $K3$ Surfaces}\label{SEC002}

\subsection{Interesting higher Chow cycles}

The definition of higher Chow groups is given in \cite{B} (also see \cite{El-V}).
For the readers convenience, the definition  is also  included in the Appendix.  Let
$C$ and $D$ be two rational curves on $X$, 
$\Delta_C = (C\times C)\cap \Delta_X$ and $\Delta_D = (D\times D)\cap \Delta_X$. 
We assume that $C\in |nL|$ and $D\in |mL|$ and fix a point $p\in C\cap
D$. Let
\begin{equation}\label{E002}
\xi = (f_C, C\times C) + (f_D, D\times D) + (f_\Delta, \Delta_X) + \eta \in \CH^3(X\times
X, 1)
\end{equation}
where 
\begin{equation}\label{E103}
(f_C) = m\Delta_C - m (p\times C) - m (C\times p)
\end{equation}
\begin{equation}\label{E104} 
(f_D) = n (p\times D) + n (D\times p) - n \Delta_D
\end{equation}
and
\begin{equation}\label{E105}
(f_\Delta) = n \Delta_D - m\Delta_C.
\end{equation}
It is trivial to find $\eta$ such that $\DIV(\xi) = 0$. 
More specifically, since $m C\sim_{\text{rat}} n D$, we can find $g\in
\BC(X)^*$ such that $(g) = n D - m C$, where $\sim_{\text{rat}}$ is the
rational equivalence relation. Then we simply let
\begin{equation}\label{E400}
\eta = (\pi_1^* g, X\times p) + (\pi_2^* g, p\times X)
\end{equation}
where $\pi_1$ and $\pi_2$ are the projections from $X\times X$ 
onto the first and second factors respectively. 

\subsection{Deformation of $\xi$}

Next, let us study the deformation of $\xi$ and the corresponding
regulator map as $X$ deforms in the moduli space of polarized $K3$'s.

Let $(X, L)$ be a special polarized $K3$ surface and $M\cup N$ be a union of two rational curves
on $X$ with the following properties:
\begin{enumerate}
\item $\rank_\BZ\Pic(X) = 3$;
\item $M$ and $N$ are nodal;
\item $M$ and $N$ meet transversely at $l = M\cdot N\ge 2$ distinct points;
\item $M$ and $N$ are linearly independent in $H^{1,1}(X, \BQ)$;
\item $M + N\sim_{\text{rat}} kL$ for some integer $k$.
\end{enumerate}
The existence of such $(X, M, N)$ will be proved later.

Now let us consider a general deformation of $X$, i.e., a family of
polarized $K3$ surfaces $W$ over the disk $\Gamma\isom \{|t| < 1\}$ with
$W_0 = X$.
Using the argument in \cite{C1}, we see that $M\cup N$ can be deformed
to a rational curve on a general fiber $W_t$ by smoothing out all but
one intersection between $M$ and $N$.
By that we mean there exists a family of rational curves $\C\subset W$ over $\Delta$, after a suitable base change, such that
$\C_0 = M\cup N$ and $\C_0^\nu = M^\nu \cup N^\nu$ after we normalize $\C$ by $\nu: \C^\nu\to \C$; the two components
$M^\nu\isom\P^1$ and $N^\nu \isom\P^1$ meet transversely at a point $r^\nu$ and $r = \nu(r^\nu)$
is one of the $l$ intersections
in $M\cap N$. So every point in $M\cap N$ except $r$ is ``smoothed'' by $\nu$. For each choice of $r$, we have a
corresponding family $\C$ with the above properties.
So there are exactly $l$
distinct families of rational curves over $\Gamma$ in $W$ with central fiber $M\cup
N$. Let $\C$ and $\D\subset W$ be two of them.

Let $P\subset W$ be a section of $W/\Gamma$ with $P\subset \C\cap
\D$. We will show that $P$ can be chosen such that $p = P_0 \in M\backslash N$.
Now let us consider a higher Chow precycle $\xi$ on $Z = W\times_\Gamma W$,
whose restriction to a general fiber $W_t\times W_t$ is a class
given in \eqref{E002}. 
Here we have $m=n$ with $m$ and $n$ in \eqref{E103}-\eqref{E105}.
That is, we construct a family version of $\xi$ just as above with
$(C, D, p)$ replaced by $(\C, \D, P)$:
\begin{equation}\label{E004}
\xi = (f_\C, \C\times_\Gamma \C) + (f_\D, \D\times_\Gamma \D) + (f_\Delta, \Delta_W) + \eta
\end{equation}
Note that we do not necessarily have $\DIV(\xi) = 0$ although
$\DIV(\xi_t) = 0$ for all $t\ne 0$; $\DIV(\xi)$ might be supported on
some surfaces on the central fiber $Z_0$.
It is easy to see that
\begin{equation}\label{E005}
(f_\C) \ne  \Delta_\C - (P\times_\Gamma \C) -  (\C\times_\Gamma P)
\end{equation}
since otherwise the restriction of $(f_\C)$ to $N\times N$ will be $\Delta_N$, which is
not rationally equivalent to 0. 
Hence, unlike the construction on a single $K3$ surface,
$f_\C$ has zeros or poles outside of $\Delta_\C$, $P\times_\Gamma\C$ and $\C\times_\Gamma P$;
$(f_\C)$ is also supported on some surfaces contained in $W_0\times W_0 = X\times X$.
We claim that $f_\C$ has zeros or poles along $N\times
N$. 
That is, we have
\begin{equation}\label{E109}
(f_\C) =  \Delta_\C - (P\times_\Gamma \C) -  (\C\times_\Gamma P) +
  \mu (N\times N)
\end{equation}
for some $\mu\in\BZ$. Furthermore we claim that $\mu = 1$, viz.,
\begin{equation}\label{E006}
(f_\C) =  \Delta_\C - (P\times_\Gamma \C) -  (\C\times_\Gamma P) + (N\times N).
\end{equation}

Note that $f_\C$ is a rational function on $\C\times_\Gamma \C$. Or better, we should think of 
its pullback $f_\C^\nu = \nu^* f_\C$ on $\C^\nu \times_\Gamma C^\nu$, where
$\nu: \C^\nu\to \C$ is the normalization of $\C$. 
As described before, the general
fiber of $\C^\nu$ is a smooth rational curve and hence the central
fiber $\C_0^\nu$ has to be the union $M^\nu\cup N^\nu$,
where $M^\nu$ and $N^\nu$ are the normalizations of $M$ and $N$,
respectively, and $M^\nu$ and $N^\nu$ meet transversely at a
point $r^\nu$ lying over $r = \nu(r^\nu)\in M\cap N$. To reiterate, the
$l$ different choices of $r\in M\cap N$ give arise to $l$ distinct
families of rational curves in $W$ with central fiber $M\cup N$.
Now on $\C^\nu\times_\Gamma \C^\nu$,
\begin{equation}\label{E950}
\Delta_\C \not\sim_\rat (P^\nu\times_\Gamma \C^\nu) + (\C^\nu\times_\Gamma P^\nu)
\end{equation}
as pointed out in \eqref{E109}, where
$P^\nu =
\nu^{-1}(P)$. 
On the other hand, the two sides of \eqref{E950} are rationally equivalent when restricted to the
general fibers. Therefore, we have
\begin{equation}\label{E951}
\begin{split}
\Delta_C \sim_\rat & (P^\nu\times_\Gamma \C^\nu) + (\C^\nu\times_\Gamma P^\nu) - \mu(N^\nu \times N^\nu)\\
&\quad - \mu_M (M^\nu\times N^\nu) - \mu_N (N^\nu\times M^\nu).
\end{split}
\end{equation}
When we restrict \eqref{E951} to $M^\nu\times M^\nu$, we have
\begin{equation}\label{E952}
\Delta_{M^\nu}\sim_\rat
(P_0^\nu \times M^\nu) + (M^\nu\times P_0^\nu) - \mu_M(M^\nu\times r^\nu) - \mu_N(r^\nu\times M^\nu).
\end{equation}
This forces that $\mu_M = \mu_N = 0$.
Similarly,
when we restrict it to $M^\nu \times N^\nu$, we have
\begin{equation}\label{E110}
-\mu (r^\nu \times N^\nu) + (P_0^\nu \times N^\nu) \sim_\rat 0,
\end{equation}
and hence $\mu = 1$.

One thing worth noting is that the threefold $\C^\nu\times_\Gamma \C^\nu$ is actually singular. It has a rational
double point at $r^\nu\times r^\nu$. The above argument works nevertheless. One may choose to work with a
desingularization of $\C^\nu\times_\Gamma \C^\nu$; the same argument works almost without any change.
 
Similarly, we have
\begin{equation}\label{E007}
(f_\D) = (P\times_\Gamma \D) + (\D\times_\Gamma P) - \Delta_\D - (N\times N).
\end{equation}
We see that the terms $(N\times N)$ cancel each other out. Hence $\xi$ extends to a
higher Chow cycle on $Z$. Note that $Z$ is an analytic space; so a higher Chow
cycle in this context is likewise regarded as an analytic cycle.

Next, we will attend to the calculation of $r_{3,1}(\xi)(\omega)$ for an algebraic
class $\omega\in V(X)$ on the central fiber.
Our objective is to show that
\begin{equation}\label{E009}
r_{3,1}(\xi)(i_* \omega) = r_{3,1}(i^*\xi) (\omega) \ne 0
\end{equation}
for some $\omega\in V(X)$, where $i$ is the inclusion
 $Z_0\hookrightarrow Z$ and $i_*$ is the Gysin map
\begin{equation}\label{E210}
i_*: H^{2,2}(Z_0) \to H^{3,3}(Z),
\end{equation}
defined on the level of currents.
Note that $H^{3,3}(Z)$ is now Dolbeault (again, as $Z$ is analytic).
Actually, we will restrict $\omega$ to a subspace of $V(X)$. Let
\begin{equation}\label{E111}
\hat V(X) = V(X)\cap [M\times M]^\perp \cap [N\times N]^\perp \cap
H^{4}_{\text{\rm alg}}(X\times X,\BQ),
\end{equation}
where the latter term is the space of algebraic cocycles. 
Note that the condition $\rank_\BZ \Pic(X) = 3$ implies that
$\hat V(X)$ is nontrivial.
We want to show that
\begin{equation}\label{E112}
r_{3,1}(\xi)(i_* \omega) = r_{3,1}(i^*\xi) (\omega) \ne 0
\end{equation}
for $\omega\ne 0\in \hat V(X)$.
Our main technical difficulty to compute $r_{3,1}(\xi)(i_* \omega)$
lies in the fact that $f_\C$ vanishes on $N\times N$.
To overcome this, we blow up $Z$ along $N\times N$. Let $\pi: \wt{Z}\to Z$ be the
blowup with exceptional divisor $E$. Then from \eqref{E004},
\begin{equation}\label{E011}
\pi^* \xi = (\wt{f}_\C, \wt{\C\times_\Gamma \C}) + 
(\wt{f}_\D, \wt{\D \times_\Gamma \D}) + (\wt{f}_\Delta, \wt{\Delta}_W)
+ \pi^* \eta + \alpha
\end{equation}
where $\wt{\C\times_\Gamma \C}$, $\wt{\D\times_\Gamma \D}$
and $\wt{\Delta}_W\subset \wt{Z}$ are the proper transforms of $\C\times_\Gamma \C,
\D\times_\Gamma \D$ and
$\Delta_W$ under $\pi$, respectively, and $\alpha$ is a higher Chow precycle
supported on $E$. It is not hard to see that
\begin{equation}\label{E012}
\DIV(\alpha) = - (\wt{\C\times_\Gamma \C}\cap E) + (\wt{\D\times_\Gamma \D} \cap E).
\end{equation}
Both $\wt{\C\times_\Gamma \C}\cap E$ and $\wt{\D\times_\Gamma \D} \cap E$ are
rational sections of the $\P^2$ bundle $E$ over $N\times N$.
Note that \eqref{E011} and \eqref{E012} does not determine $\pi^*\xi$
uniquely. But as far as the value of
\begin{equation}\label{E013}
r_{3,1}(\xi)(i_* \omega) = r_{3,1}(\pi^* \xi)(\pi^* i_* \omega)
\end{equation}
is concerned, it does not really matter since if we choose $\alpha$
differently, say $\alpha'$, then (using \cite{B}):
\begin{equation}\label{E350}
\alpha' - \alpha
\in (i_E)_* \CH^2(E,1) \simeq \CH^2(N\times
N,1) \oplus \CH^1(N\times N,1),
\end{equation}
where $i_E$ is the embedding $E\hookrightarrow \wt{Z}$. If we assume for the moment that $N$ is smooth, then
\begin{equation}\label{E351}
r_{3,1}(\alpha' - \alpha)(\pi^* i_* \omega) = 0
\end{equation}
for $\omega\in \hat V(X)$ since $\omega \wedge [N \times N]
= 0$ for such $\omega$. In summary, the value of $r_{3,1}(\pi^*
\xi)(\pi^* i_* \omega)$ is independent of the choice of
$\alpha$, once one first passes to a normalization of $N$.
This is the crucial observation which enables us to compute
the regulators in \eqref{E013} without knowing the exact form of $\pi^*\xi$.

Let $\wt{Z}_0$ be the proper transform of $Z_0 = W_0\times W_0$ under
the blowup $\pi: \wt{Z}\to Z$. We have the commutative diagram
\begin{equation}\label{E211}
\xymatrix{\wt{Z}_0 \ar[r]^{\wt{i}} \ar[d]^{\pi_0} & \wt{Z} \ar[d]^\pi\\
Z_0 \ar[r]^i & Z
}
\end{equation}
where $\wt{i}$ is the inclusion $\wt{Z}_0\hookrightarrow \wt{Z}$.
It is not hard to see that
\begin{equation}\label{E212}
\pi^* i_* \omega = \wt{i}_* \pi_0^* \omega,
\end{equation}
using the fact that $\omega\in \hat{V(X)} \subset  [N \times N]^{\perp}$.
Therefore, \eqref{E013} becomes
\begin{equation}\label{E213}
r_{3,1}(\pi^* \xi)(\pi^* i_* \omega) = 
r_{3,1}(\pi^* \xi)(\wt{i}_* \pi_0^* \omega)
= r_{3,1}(\wt{i}^* \pi^* \xi)(\pi_0^* \omega).
\end{equation}

When we restrict $\pi^* \xi$  (see \eqref{E011})  to $\wt{Z}_0$, we have
\begin{equation}\label{E113}
\begin{split}
\wt{i}^* \pi^*\xi
&= (c_{M\times M}, \wt{M\times M}) + (c_{\Delta}, \wt{\Delta}_W) +
\wt{i}^* \pi^* \eta\\
&\quad +
(\phi_{M\times N}, \wt{M\times N}) + 
(\phi_{N\times M}, \wt{N\times M}) + 
\wt{i}^* \alpha,
\end{split}
\end{equation}
where $c_\bullet$ are nonzero constants, $\wt{M\times M}, \wt{M\times
  N}$ and $\wt{N\times M}$ are proper transforms of $M\times M$,
  $M\times N$ and $N\times M$, 
and $\phi_{M\times N}$ and
$\phi_{N\times M}$ are rational functions on $\wt{M\times N}$ and
$\wt{N\times M}$, respectively. Note that $\wt{M\times N}$ and
  $\wt{N\times M}$ are blowups of $M\times N$ and $N\times M$ along
  $N\times N$ and hence
\begin{equation}\label{E301}
\wt{M\times N}\isom M\times N \text{ and }
\wt{N\times M}\isom N\times M,
\end{equation}
using the assumption that $M$ and $N$ meet transversally.
So we may write $M\times N$ and $N\times M$ for $\wt{M\times N}$ and
$\wt{N\times M}$ wherever no confusion is possible.
It is easy to see that 
\begin{equation}\label{E300}
r_{3,1}(\wt{i}^* \pi^* \eta)(\pi_0^*\omega) = 0.
\end{equation}
Also the contributions of the first two terms on the RHS of \eqref{E113}
are zero as well since
\begin{equation}\label{E302}
\omega\wedge [M\times M] = \omega \wedge [\Delta_W] = 0.
\end{equation}
So all the nontrivial contributions to
$r_{3,1}(\wt{i}^* \pi^* \xi)(\pi_0^* \omega)$
come from the last three terms of \eqref{E113}.
It follows from \eqref{E110} that
\begin{equation}\label{E303}
(\phi_{M\times N}) = (r_1\times N) - (r_2\times N),
\end{equation}
where $r_1$ and $r_2$ are two distinct points among the intersection $M\cap N$.
Similarly,
\begin{equation}\label{E304}
(\phi_{N\times M}) = (N\times r_1) - (N\times r_2).
\end{equation}
More precisely, there exists a rational function $\phi\in \BC(M)^*$
such that
\begin{equation}\label{E305}
(\phi) = r_1 - r_2, \ \phi_{M\times N} = (\rho_1)^* \phi, \
  \text{and}\
\phi_{N\times M} = (\rho_2)^* \phi
\end{equation}
where $\rho_1$ and $\rho_2$ are the projections
$M\times N\to M$ and $N\times M\to M$, respectively.
It remains to figure out how to compute the term
\begin{equation}\label{E114}
r_{3,1}(\wt{i}^* \alpha)(\pi_0^*\omega).
\end{equation}
Once that is done, we will know exactly how to compute
$r_{3,1}(\wt{i}^* \pi^* \xi)(\pi_0^* \omega)$. Then by an
appropriate choice of $\omega$, we will arrive at \eqref{E112}.

Obviously, $\pi_0:\wt{Z}_0\to Z_0$ is the blowup of $Z_0$ along
$N\times N$ with exception divisor $E_0 = E\cap \wt{Z}_0$. 
Note that $E_0$ is a $\P^1$ bundle over $N\times N$.
Since
$\wt{i}^* \alpha$ is supported on $E_0$, 
it can be regarded as a higher Chow precycle on
$E_0$.
More precisely, let $\nu: E_0^\nu\to \wt{Z}_0$ be the normalization of
$E_0$. There exists a higher Chow precycle
\begin{equation}\label{E115}
\beta \in C_{\text{pre}}^2(E_0^\nu, 1)
\end{equation}
such that $\nu_* \beta = \wt{i}^* \alpha$. Then
\begin{equation}\label{E116}
r_{3,1}(\wt{i}^* \alpha)(\pi_0^*\omega) 
= r_{2,1}(\beta)(\nu^* \pi_0^* \omega)
\end{equation}
where
\begin{equation}\label{E207}
\DIV(\beta) = - \nu^*\wt{i}^* (\wt{\C\times_\Gamma \C}) + \nu^*\wt{i}^* (\wt{\D\times_\Gamma \D}).
\end{equation}

\subsection{Pairing on $z_{\text{rat}}^*$}\label{SS002}

The RHS of \eqref{E116} can be put in a more general context
as follows.
Let $Y$ be a smooth projective variety of pure
dimension $n$. Let $z_{\text{rat}}^k(Y)$ be the subgroup of the algebraic
cycles $z^k(Y)$ of codimension $k$ on $Y$, that are rationally equivalent to
zero. Note that for every $\eta\in z_{\text{rat}}^k (Y)$, there exists
$\beta\in C_{\text{pre}}^{k}(Y, 1)$ with 
\begin{equation}\label{E204}
\DIV(\beta) = \eta.
\end{equation}
We have a pairing
\begin{equation}\label{E202}
\langle\ ,\  \rangle: z_{\text{rat}}^k(Y) \times
z_{\text{rat}}^{n+1-k}(Y)\to \BR
\end{equation}
given by
\begin{equation}\label{E203}
\langle \eta, \varepsilon\rangle = r_{k,1}(\beta)(\varepsilon) = \sum
\int_{D_i\cap \varepsilon} \log |f_i|
\end{equation}
where $\eta\in z_{\text{rat}}^k(Y)$, $\varepsilon\in
z_{\text{rat}}^{n+1-k}(Y)$ and $\beta = \sum (f_i, D_i)$ is a higher Chow precycle
satisfying \eqref{E204}. Note that the RHS of
\eqref{E203} is only well-defined for $|\eta|\cap |\varepsilon| =
\emptyset$. A less obvious statement however, is that $\beta$
can be chosen so that $D_i\cap \varepsilon$ is a proper intersection for all $i$
(hence is a zero-cycle).   Although we explain  this in detail
in the Appendix, the existence of $\beta$ can be deduced from
\cite{B} (Lemma (4.2)),  together with
a standard norm argument.
It is easy to see that the pairing is well-defined,
i.e., it is independent of the exact choice of $\beta$, since if
$\DIV(\beta - \beta') = 0$, then $\beta - \beta'$ is a higher Chow
cycle and hence
\begin{equation}\label{E205}
r_{k,1}(\beta - \beta')(\varepsilon) = 0
\end{equation}
as $\varepsilon \sim_{\text{rat}} 0$. 
It is also obvious that this pairing extends naturally to
$z_{\text{rat}}^*(Y)\tensor_\BZ \BQ$.
The projection formula holds trivially from the definition. That is, we
have
\begin{prop}\label{PROP003}
Let $\pi: X\to Y$ be a flat surjective morphism between two smooth
projective varieties $X$ and $Y$. Then
$\langle \eta, \pi^* \varepsilon\rangle = \langle \pi_* \eta,
\varepsilon\rangle$ for all $\eta\in z_\text{rat}^k(X)$ and
$\varepsilon \in z_\text{rat}^{m - k+1}$ with $|\pi_* \eta|\cap
|\varepsilon| = \emptyset$,
where $m = \dim X$.
\end{prop}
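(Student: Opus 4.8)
The plan is to unwind both pairings directly from the defining formula \eqref{E203} and to match them term by term; the only substantive inputs are the compatibility of the divisor map on precycles with proper pushforward via the field norm, and the elementary identity $\log|\mathrm{Nm}(f)|=\sum\log|f|$ over a fibre, so that — as asserted in the text — the statement is essentially formal once the cycles are placed in general position. Write $n=\dim Y$, so $\varepsilon\in z_\rat^{m-k+1}(Y)$, $\pi^*\varepsilon\in z_\rat^{m-k+1}(X)$ (flat pullback preserves codimension), and $\pi_*\eta\in z_\rat^{n-m+k}(Y)$ (proper pushforward preserves rational equivalence, and on the components that map generically finitely it preserves dimension); both pairings are thus defined. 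Using Bloch's moving lemma (\cite{B}, Lemma~(4.2)) I first pick a representing precycle $\beta=\sum_i(f_i,D_i)\in C_{\text{pre}}^{k}(X,1)$ with $\DIV(\beta)=\eta$ such that every $D_i$, and the divisor of each $f_i$ on $D_i$, is in general position relative to $\pi^{-1}(|\varepsilon|)$ and to the branch and singular loci of $\pi|_{D_i}$, and such that each $D_i\cap\pi^*\varepsilon$ is a proper (hence zero-dimensional) intersection; flatness of $\pi$ makes $\pi^*\varepsilon$ the honest preimage cycle. I then push $\beta$ forward, setting
\[
\pi_*\beta \;=\; \sum_i\big(\mathrm{Nm}_{D_i/\pi(D_i)}(f_i),\ \pi(D_i)\big),
\]
where the sum runs over those $i$ for which $\pi|_{D_i}$ is generically finite onto its image and the remaining (contracted) components are discarded. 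The classical identity $\mathrm{div}_B(\mathrm{Nm}_{D/B}g)=(p)_*\,\mathrm{div}_D(g)$ for a generically finite $p\colon D\to B$ gives $\DIV(\pi_*\beta)=\pi_*\DIV(\beta)=\pi_*\eta$, so $\pi_*\beta$ is an admissible representative of $\pi_*\eta$, and by \eqref{E203},
\[
\langle\eta,\pi^*\varepsilon\rangle=\sum_i\int_{D_i\cap\pi^*\varepsilon}\log|f_i|,\qquad
\langle\pi_*\eta,\varepsilon\rangle=\sum_i\int_{\pi(D_i)\cap\varepsilon}\log\big|\mathrm{Nm}(f_i)\big|.
\]

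Next I equate these sums term by term. Fix an $i$ with $p_i:=\pi|_{D_i}$ generically finite of degree $d_i$ (passing to the normalisation of $D_i$ if it is singular, which leaves the integral unchanged). By the projection formula for algebraic cycles, $(p_i)_*\big(D_i\cdot\pi^*\varepsilon\big)=(p_i)_*D_i\cdot\varepsilon=d_i\,\pi(D_i)\cdot\varepsilon$ as zero-cycles on $Y$; and by the general-position choices for $\beta$, each point $Q$ occurring in the zero-cycle $\pi(D_i)\cap\varepsilon$ avoids the branch locus of $p_i$ and $|\mathrm{div}_{D_i}(f_i)|$, so it is the image of exactly $d_i$ points $x_1,\dots,x_{d_i}$ of $D_i\cap\pi^*\varepsilon$, counted with matching multiplicities, at which $f_i$ is finite and nonzero. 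Since $|\mathrm{Nm}(f_i)(Q)|=\prod_{l}|f_i(x_l)|$, we get $\log|\mathrm{Nm}(f_i)(Q)|=\sum_l\log|f_i(x_l)|$; summing over the points of $\pi(D_i)\cap\varepsilon$ yields $\int_{\pi(D_i)\cap\varepsilon}\log|\mathrm{Nm}(f_i)|=\int_{D_i\cap\pi^*\varepsilon}\log|f_i|$. For a contracted $D_i$ both $\pi_*(f_i,D_i)$ and — by the positioning — the zero-cycle $D_i\cdot\pi^*\varepsilon$ vanish, so such $i$ contribute nothing on either side. Adding up over $i$ gives $\langle\eta,\pi^*\varepsilon\rangle=\langle\pi_*\eta,\varepsilon\rangle$.

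The one place that calls for genuine care — and the reason the argument is not literally a one-line calculation — is the positioning: one must invoke the moving lemma to arrange simultaneously that $\pi^*\varepsilon$ meets every $D_i$ properly and away from all branch and singular loci, so that the cycle-level projection formula transports $D_i\cdot\pi^*\varepsilon$ onto $\pi(D_i)\cdot\varepsilon$ with the correct multiplicities and the contracted terms genuinely drop out. Granting that, together with the norm–divisor compatibility, everything else is the formal identity $\log|\mathrm{Nm}|=\sum\log|\cdot|$ — equivalently the projection formula for integration of currents $\int_{D_i}p_i^*(\,\cdot\,)=\int_{\pi(D_i)}(\,\cdot\,)$. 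From this last viewpoint one sees the proposition most transparently by working inside the cycle complex underlying the higher Chow groups, where $\DIV$ is (up to sign) the boundary map: the whole statement is then the compatibility of proper pushforward with the face restrictions and with integration, which is exactly why it ``holds trivially from the definition''.
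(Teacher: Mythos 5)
Your proof is correct. The paper offers no argument for this proposition beyond the remark that ``the projection formula holds trivially from the definition,'' and your computation --- pushing the representing precycle forward by the norm, using $\DIV(\mathrm{Nm}_{D_i/\pi(D_i)}(f_i)) = (\pi|_{D_i})_*\DIV(f_i)$ together with $\log|\mathrm{Nm}(f_i)(Q)| = \sum_l \log|f_i(x_l)|$ --- is precisely the standard unwinding of that remark, with the general-position requirements and the vanishing of the contracted-component contributions handled appropriately.
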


A little less obvious fact is that this pairing is symmetric. That is,
it has the following property which we will call the {\it reciprocity
property\/} of the pairing \eqref{E202}.

\begin{prop}\label{PROP001}
For all $\eta\in z_{\text{rat}}^k(Y)$ and $\varepsilon\in
z_{\text{rat}}^{n+1-k}(Y)$ with $|\eta|\cap |\varepsilon| = \emptyset$,
$\langle \eta, \varepsilon\rangle = \langle \varepsilon, \eta\rangle$.
\end{prop}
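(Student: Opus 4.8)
The plan is to reduce the reciprocity statement to the classical Weil reciprocity law on curves, exactly as one does when proving that the archimedean height pairing is symmetric. Write $\eta = \DIV(\beta)$ with $\beta = \sum_i (f_i, D_i) \in C^k_{\text{pre}}(Y,1)$ and $\varepsilon = \DIV(\gamma)$ with $\gamma = \sum_j (g_j, E_j) \in C^{n+1-k}_{\text{pre}}(Y,1)$, where — invoking the moving statement quoted after \eqref{E203}, and a further perturbation of $\gamma$ — all the intersections $D_i \cap E_j$ are proper, hence are curves (or empty), and all the zero-dimensional intersections $D_i \cap |\varepsilon|$ and $E_j \cap |\eta|$ avoid each other as required by the hypothesis $|\eta|\cap|\varepsilon| = \emptyset$. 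Then, by definition \eqref{E203},
\begin{equation*}
\langle \eta, \varepsilon\rangle = \sum_{i,j} \int_{D_i \cap (g_j)} \log|f_i|
\quad\text{and}\quad
\langle \varepsilon, \eta\rangle = \sum_{i,j} \int_{E_j \cap (f_i)} \log|g_j|,
\end{equation*}
so it suffices to prove, for each fixed pair $(i,j)$, that
\begin{equation}\label{E-recpair}
\int_{D_i \cap (g_j)} \log|f_i| = \int_{E_j \cap (f_i)} \log|g_j|.
\end{equation}

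The key step is to interpret \eqref{E-recpair} as Weil reciprocity on the curve $\Theta = D_i \cap E_j$ (after normalization $\nu: \Theta^\nu \to \Theta$). On $\Theta^\nu$ we have the two restricted rational functions $\bar f = \nu^*(f_i|_{\Theta})$ and $\bar g = \nu^*(g_j|_{\Theta})$; these are genuine nonzero rational functions on $\Theta^\nu$ because $D_i \not\subset (f_i)$ and $E_j \not\subset (g_j)$ and the intersections were arranged to be proper. Now $D_i \cap (g_j)$, as a zero-cycle, is (up to the components of $(g_j)$ not contained in $D_i$, which contribute nothing to the relevant integral once one uses properness) the pushforward to $Y$ of the divisor $(\bar g)$ on $\Theta^\nu$; symmetrically for $E_j \cap (f_i)$ and $(\bar f)$. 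Hence both sides of \eqref{E-recpair} equal, respectively, $\sum_{x} \Ord_x(\bar g)\log|\bar f(x)|$ and $\sum_x \Ord_x(\bar f)\log|\bar g(x)|$ over $x \in \Theta^\nu$, and these are equal by taking $\log|\cdot|$ of the Weil reciprocity identity $\prod_x \bar f(x)^{\Ord_x(\bar g)} = \prod_x \bar g(x)^{\Ord_x(\bar f)}$ (the tame-symbol formula $\prod_x \{\bar f, \bar g\}_x = 1$ on the complete curve $\Theta^\nu$). One must also handle the higher-dimensional components of $D_i \cap (g_j)$ that are not zero-dimensional: but $(g_j)$ is a divisor on $E_j$, so $D_i \cap (g_j) = D_i \cap E_j \cap (g_j)$ has the expected dimension $0$ precisely when $D_i \cap E_j$ is a proper (curve) intersection and $(g_j)|_{D_i\cap E_j}$ is a proper divisor on it — which is what the moving argument secures.

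The main obstacle is the bookkeeping needed to guarantee that all the relevant intersections are simultaneously proper, so that the reduction to curves is legitimate: one needs $\beta$ and $\gamma$ chosen so that every $D_i \cap E_j$ is either empty or a curve, that $(f_i)|_{D_i\cap E_j}$ and $(g_j)|_{D_i \cap E_j}$ are proper divisors on that curve, and that the supports stay disjoint in the prescribed way. This is where one cites \cite{B}, Lemma (4.2), together with the standard norm/moving argument referred to after \eqref{E203} and developed in the Appendix; granting that, the only analytic input is that $\log|\cdot|$ converts Weil reciprocity into the additive identity \eqref{E-recpair}, and the proposition follows by summing over $(i,j)$. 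Independence of the choice of $\beta$ and $\gamma$ is already established before Proposition~\ref{PROP001}, so no additional argument on that point is needed. $\qed$
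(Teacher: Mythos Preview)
Your proposal is correct and follows essentially the same route as the paper's proof: after invoking Bloch's moving lemma to put the precycles $(f_i,D_i)$ and $(g_j,E_j)$ in general position, both arguments restrict the two rational functions to the intersection curve $D_i\cap E_j$ (the paper writes $C$, you normalize to $\Theta^\nu$) and then read off the desired equality by taking $\log|\cdot|$ of Weil reciprocity, using $|\eta|\cap|\varepsilon|=\emptyset$ to ensure the tame symbols collapse to the simple form $\bar f(x)^{\Ord_x(\bar g)}$ or $\bar g(x)^{-\Ord_x(\bar f)}$. Your treatment is slightly more explicit about summing over pairs $(i,j)$ and about normalizing the curve, whereas the paper abbreviates by assuming $D$, $E$, and $C=D\cap E$ irreducible, but the substance is identical.
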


\begin{proof} This can be deduced from 
 \thmref{THMA1} on a generalized pairing, in the Appendix;
however it is instructive to give a direct proof of this.
Let $(f, D)$ and $(g, E)$ be the higher Chow precycles such that $\eta =
\DIV(f)$ and $\varepsilon = \DIV(g)$.
Again, by Lemma (4.2) in \cite{B}, we can
assume that
with regard to the pairs $(f,D)$, $(g,E)$, everything is
in ``general'' position. For notational simplicity, let
us assume that $D$ and $E$ are irreducible and meet properly
along an irreducible curve $C$. Let
\begin{equation}\label{E250}
\begin{aligned}
f_c := f\big|_C \in \BC(C)^* &\\
g_c := g\big|_C\in \BC(C)^* &
\end{aligned}
\end{equation}
For every point $p\in C$, put
\begin{equation}\label{E251}
T_p\{f_c,g_c\} = 
(-1)^{\nu_p(f_c)\nu_p(g_c)}\biggl(\frac{f_c^{\nu_p(g_c)}}{g_c^{\nu_p(f_c)}}\biggr)_p.
\end{equation}
where $\nu_p(h)$ is the vanishing order of a function $h$
at $p$.
Since $|\varepsilon| \cap |\eta| = \emptyset$, it follows that
\begin{equation}\label{E252}
T_p\{f_c,g_c\} =
\begin{cases}
f_c^{\nu_p(g_c)}(p)&\text{if $\nu_p(g_c) \ne
  0$}\\
g_c^{-\nu_p(f_c)}(p)&\text{if $\nu_p(f_c) \ne 0$}\\
1&\text{otherwise}
\end{cases}
\end{equation}
Then it is a consequence of Weil reciprocity:
\begin{equation}\label{E253}
\prod_{p\in C}T_p\{f_c,g_c\} = 1.
\end{equation}
that
\begin{equation}\label{E254}
\int_{D\cap \DIV(g)} \log|f| = \int_{E\cap \DIV(f)} \log|g|.
\end{equation}
Obviously, this is equivalent to $\langle \eta, \varepsilon\rangle =
\langle \varepsilon, \eta\rangle$.
\end{proof}

In addition, this pairing is also nondegenerate.

\begin{prop}\label{PROP002}
If $\langle \eta, \varepsilon\rangle = 0$
for all $\varepsilon\in
z_{\text{rat}}^{n+1-k}(Y)$ with $|\eta|\cap |\varepsilon| =
\emptyset$,
then $\eta = 0$.
\end{prop}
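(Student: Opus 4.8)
The plan is to reduce the nondegeneracy statement to the nondegeneracy of a more classical pairing, namely the intersection pairing coupled with the cup product structure of $H^{1,1}$, via the explicit formula \eqref{E203}. Suppose $\eta\in z_{\text{rat}}^k(Y)$ is nonzero, represented by a precycle $\beta = \sum(f_i, D_i)$ with $\DIV(\beta)=\eta$. Since $\eta\neq 0$, the function $\log|f_i|$ on (some component of) $D_i$ is genuinely nonconstant, or at least the collection $\{(f_i,D_i)\}$ is not a higher Chow cycle. First I would observe that $r_{k,1}(\beta)$ defines a nonzero class in $H^{n+1-k,n+1-k}(Y,\BR)/(\text{image of algebraic part})$ precisely when $\eta\neq 0$; this is where one must be a little careful, since $r_{k,1}(\beta)$ in general is only a current, and I would instead argue directly at the level of the pairing with a well-chosen test cycle $\varepsilon$.

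The key step is the construction of the test cycle. Given $\eta = \sum_j n_j Z_j$ with $Z_j$ irreducible of codimension $k$, not all $n_j$ zero, I would pick one component, say $Z_1$, on which $\log|f_i|$ is nonconstant for some $i$ with $D_i\supset Z_1$, and then produce a rational curve (or more generally a cycle $\varepsilon\in z_{\text{rat}}^{n+1-k}(Y)$) through $Z_1$, meeting $\DIV(\beta)$ properly and meeting $|\eta|$ emptily, along which $\int_{D_i\cap\varepsilon}\log|f_i|$ is forced to be nonzero. The cleanest route: localize near a point $p\in Z_1$ where $f_i$ is, say, a local unit with $|f_i(p)|\neq 1$; choose $\varepsilon$ so that $D_i\cap\varepsilon$ is a single reduced point near $p$ with $|f_i|>1$ there, while arranging $D_{i'}\cap\varepsilon$ to contribute nothing (or a controlled amount) for $i'\neq i$ — e.g.\ by taking $\varepsilon$ to avoid the other $D_{i'}$, using that $Y$ is projective and $n+1-k$ leaves enough room to move $\varepsilon$ in its rational equivalence class (Chow's moving lemma and a Bertini-type argument). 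That $\varepsilon$ can be taken in $z_{\text{rat}}^{n+1-k}(Y)$ is automatic: take $\varepsilon = \varepsilon^+ - \varepsilon^-$ for two members of a very ample linear system, or a difference of two rational curves in the same homology class.

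The main obstacle I anticipate is the bookkeeping needed to isolate a single term $\int_{D_i\cap\varepsilon}\log|f_i|$ and guarantee it does not get cancelled by contributions from the other $(f_{i'},D_{i'})$ or by a different choice of $\beta$ — the pairing is well-defined independent of $\beta$ (as shown in the excerpt, using \eqref{E205}), but a crude choice of $\varepsilon$ might pair to zero with $\eta$ while $\eta\neq 0$. One must exploit that distinct irreducible components of $|\eta|$ can be separated: since $\eta\neq 0$ there is a component $Z_1$ appearing with nonzero multiplicity, and $\varepsilon$ can be concentrated transversally near a generic point of $Z_1$, away from all other components of $|\eta|$ and from the ``spurious'' part of $|\DIV(\beta)|\setminus|\eta|$ (this spurious part is where the precycle fails to be a cycle only up to the choices, so by the reciprocity/well-definedness we may first replace $\beta$ by one in general position à la Bloch's Lemma (4.2), already invoked above). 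With $\varepsilon$ so localized, $\langle\eta,\varepsilon\rangle$ reduces to $\pm\log|f_i(p)|\cdot(\text{nonzero intersection number})$, which is nonzero by the choice of $p$. This contradicts the hypothesis, proving $\eta=0$. One could alternatively phrase the whole argument using Proposition~\ref{PROP001} (reciprocity) to swap the roles of $\eta$ and a universal family of test cycles, but the direct localization argument seems most transparent.
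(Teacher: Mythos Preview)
Your final parenthetical remark is in fact the paper's proof: the authors invoke reciprocity (\propref{PROP001}) to write $\langle\eta,\varepsilon\rangle = \langle\varepsilon,\eta\rangle = \int_{E\cap\eta}\log|g|$, pick a general irreducible $E$ of codimension $n-k$ so that $E\cap\eta = \sum_{i=1}^N(p_i-q_i)$ is a nonzero zero-cycle with the $p_i$'s disjoint from the $q_i$'s, and then observe (via a general projection $E\dashrightarrow\P^1$) that one can always find $g\in\BC(E)^*$ with $\sum\big(\log|g(p_i)|-\log|g(q_i)|\big)\ne 0$. That is the entire argument.

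Your preferred ``direct localization'' route, by contrast, has genuine gaps as written. First, if $Z_1$ is a component of $\eta=\DIV(\beta)$ with $D_i\supset Z_1$, then $Z_1$ lies in the divisor of $f_i$ on $D_i$, so at a point $p\in Z_1$ the function $f_i$ is \emph{not} a local unit; you seem to conflate ``$p\in Z_1$'' with ``$p$ near $Z_1$'', and the phrase ``$\log|f_i|$ nonconstant on $Z_1$'' is likewise ill-posed. Second, and more seriously, the dimension count gives $\codim D_{i'}+\codim\varepsilon=(k-1)+(n+1-k)=n$, so $D_{i'}\cap\varepsilon$ is generically a nonempty zero-cycle for \emph{every} $i'$: you cannot choose $\varepsilon$ to avoid the other $D_{i'}$. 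Your hedge ``or a controlled amount'' points toward a salvageable idea --- let one intersection point on $D_i$ drift toward $Z_1$ so that $\log|f_i|$ there tends to $\pm\infty$ while the remaining contributions stay bounded --- but this demands a one-parameter family $\varepsilon_t$ together with control over where \emph{all} the points $D_{i'}\cap\varepsilon_t$ wander, and none of that bookkeeping is supplied. The reciprocity argument sidesteps all of this: once the roles are swapped, there is only a single $E$ and a single $g$ to choose, and the freedom in $g$ is manifest.
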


\begin{proof}
Let $(f, D)$ and $(g, E)$ be the higher Chow cycles such that
$\eta = \DIV(f)$ and $\varepsilon = \DIV(g)$.
Assume to the contrary that
$\eta  \ne 0$ and choose $E$ such that
\begin{equation}\label{E455}
E \bigcap \eta  = \sum_{i=1}^N(p_i-q_i),\quad
\{p_1,\ldots,p_N\} \bigcap \{q_1,\ldots,q_N\} = \emptyset.
\end{equation}
So it suffices to find $g\in \BC(E)^*$ such
that
\begin{equation}\label{E456}
\log\biggl|\prod_{i=1}^N\frac{g(p_i)}{g(q_i)}\biggr| =
\sum_{i=1}^N\big[\log|g(p_i)| - \log|g(q_i)|\big] \ne 0.
\end{equation}
This is obvious for $E = \P^1$. For arbitrary $E$, it is enough to
take a general projection $E\dashrightarrow \P^1$.
\end{proof}

Now let us go back to \eqref{E116}.
Its RHS can be interpreted as the pairing
\begin{equation}\label{E206}
r_{2,1}(\beta)(\nu^*\pi_0^*\omega)
= \langle \DIV(\beta), \nu^*\pi_0^*\varepsilon \rangle
\end{equation}
where $\DIV(\beta)$ is given in \eqref{E207}
and $\varepsilon\in z^2(Z_0)$ is an algebraic cycle with
$[\varepsilon] = \omega$. It
is easy to see that $\nu^*\pi_0^* \varepsilon\in
z_{\text{rat}}^2(E_0^\nu)$ for $\varepsilon$ with $[\varepsilon] =
\omega$ since
\begin{equation}\label{E209}
\omega \wedge [N\times N] = 0.
\end{equation}
For a general choice of $\varepsilon$, we clearly have $|\DIV(\beta)|\cap
|\nu^* \pi_0^* \varepsilon|=\emptyset$.
Let $N^\nu\isom \P^1$ be the normalization of $N$. Then $E_0^\nu$ is a $\P^1$
bundle over $N^\nu\times N^\nu$. 
We have the commutative diagram
\begin{equation}\label{E214}
\xymatrix{E_0^\nu \ar[r]^{\nu} \ar[d]^{\pi_N} & \wt{Z}_0 \ar[d]^{\pi_0}\\
N^\nu\times N^\nu \ar[r]^(0.6){\nu_N} & Z_0}
\end{equation}
Hence, $\nu^*\pi_0^*\varepsilon = \pi_N^* \nu_N^*\varepsilon$ and
\begin{equation}\label{E215}
\langle \DIV(\beta), \nu^*\pi_0^*\varepsilon \rangle =
\langle \DIV(\beta), \pi_N^* \nu_N^*\varepsilon \rangle = 
\langle (\pi_N)_* \DIV(\beta), \nu_N^*\varepsilon \rangle
\end{equation}
for all $\varepsilon\in z^2(Z_0)$ 
with $[\varepsilon]\in \hat V(X)$ and $|\DIV(\beta)|\cap
|\pi_N^* \nu_N^*\varepsilon|=\emptyset$.
Since $\DIV(\wt{i}^* \pi^*\xi) = 0$, it follows from \eqref{E303} and
\eqref{E304} that
\begin{equation}\label{E306}
(\pi_N)_* \DIV(\beta) = - (r_1^\nu\times N^\nu) - (N^\nu\times
  r_1^\nu) + (r_2^\nu\times N^\nu) + (N^\nu\times r_2^\nu),
\end{equation}
where $r_1^\nu$ and $r_2^\nu$ are the points over $r_1$ and $r_2$, respectively,
under the normalization $N^\nu\to N$. Combining this with
\eqref{E305}, we have
\begin{equation}\label{E307}
\begin{split}
r_{3,1}(\wt{i}^* \pi^* \xi)(\pi_0^* \omega)
&= \int_{\varepsilon\cap M\times N} \log |\phi_{M\times N}|
+ \int_{\varepsilon\cap N\times M} \log |\phi_{N\times M}|\\
&\quad+ \langle (\pi_N)_* \DIV(\beta), \nu_N^*\varepsilon \rangle\\
&= \int_{(\rho_1)_* \varepsilon} \log |\phi|
+ \int_{(\rho_2)_* \varepsilon} \log |\phi|\\
&\quad + \int_{(p_1)_* \varepsilon} \log |\varphi| 
+ \int_{(p_2)_* \varepsilon} \log |\varphi|
\end{split}
\end{equation}
where $p_1$ and $p_2$ are two projections $N\times N\to N$ and
$\varphi\in \BC(N)^*$ is a rational function with $(\varphi) = r_2 - r_1$. 
Here by $(\rho_1)_* \varepsilon$ we really mean
$(\rho_1)_* (\varepsilon\cap M\times N)$.
It remains to find appropriate $\varepsilon$ such that the RHS of
\eqref{E307} is nonzero.

\subsection{Choice of $\varepsilon$}

Consider $\varepsilon = \gamma\tensor \delta$, where $\gamma,\delta\in
z^2(X)$ satisfy
\begin{equation}\label{E308}
[\gamma] \wedge [M] = [\gamma]\wedge [N] = 0
\end{equation}
and
\begin{equation}\label{E311}
[\delta]\wedge [M+N] =
[\gamma]\wedge [\delta] = 0.
\end{equation}
Obviously, $[\varepsilon]\in \hat V(X)$ for such $\gamma$ and
$\delta$. By \eqref{E308},
\begin{equation}\label{E309}
(\rho_2)_* \varepsilon = \deg(\gamma \cdot N) (\delta\cdot M) = 0
\end{equation}
and
\begin{equation}\label{E401}
(p_2)_* \varepsilon = \deg(\gamma \cdot N) (\delta \cdot N) = 0.
\end{equation}
Therefore,
\begin{equation}\label{E310}
\begin{split}
r_{3,1}(\wt{i}^* \pi^* \xi)(\pi_0^* \varepsilon) 
&= \deg(\delta\cdot N)\left(
\int_{\gamma\cap M} \log |\phi| + \int_{\gamma\cap N}
\log |\varphi|\right)\\
&= \deg(\delta\cdot N) \bigg(
\langle r_1 - r_2, \gamma\rangle_M
+ \langle r_2 - r_1, \gamma \rangle_N
\bigg)
\end{split}
\end{equation}
where we use $\langle\ ,\ \rangle_M$ and $\langle\ ,\ \rangle_N$ for the
pairings:
\begin{equation}\label{E314}
\langle\ ,\ \rangle_M: z_\text{rat}^1(M)\times z_\text{rat}^1(M) \to \BR
\end{equation}
and
\begin{equation}\label{E900}
\langle\ ,\ \rangle_N: z_\text{rat}^1(N)\times z_\text{rat}^1(N) \to \BR
\end{equation}
respectively.
Note that for every $\gamma$ satisfying \eqref{E308}, we can always
find $\delta$ satisfying \eqref{E311} and $[\delta]\wedge [N] \ne 0$
by dimension counting. Therefore, $\deg(\delta\cdot N) \ne 0$ and it
suffices to find $\gamma$ satisfying \eqref{E308} and
\begin{equation}\label{E312}
\langle r_1 - r_2, \gamma\rangle_M
+ \langle r_2 - r_1, \gamma \rangle_N \ne 0.
\end{equation}

As a side note, we see that the LHS of \eqref{E312} vanishes for
$\gamma\in z_\text{rat}^1(X)$ and $r_1, r_2\not\in |\gamma|$;
if $\gamma = (g)$ for some $g\in \BC(X)^*$ and $g(r_1)g(r_2)\ne 0$, then
\begin{equation}\label{E313}
\begin{split}
&\quad\langle r_1 - r_2, \gamma\rangle_M + \langle r_2 - r_1, \gamma \rangle_N\\
&= \langle \gamma, r_1 - r_2 \rangle_M + \langle \gamma,
  r_2 - r_1\rangle_N\\
&= (\log |g(r_1)| - \log |g(r_2)|) + (\log |g(r_2)| - \log |g(r_1)|) = 0.
\end{split}
\end{equation}
This is consistent with the fact that $r_{3,1}(\wt{i}^* \pi^* \xi)(\pi_0^*
\varepsilon) = 0$ for $[\varepsilon] = 0$. 
Let 
\begin{equation}\label{E315}
\mu: M^\nu\cup_{\{r_1,r_2\}} N^\nu\to X
\end{equation}
be a partial normalization of $M\cup
N$ that normalizes every singularity of $M\cup N$ except $r_1$ and
$r_2$. That is, $M^\nu\cup_{\{r_1,r_2\}} N^\nu$ is the union of
$M^\nu\isom \P^1$ and $N^\nu\isom \P^1$ meeting transversely at two
points which we still denote by $r_1$ and $r_2$.

Consider $\Pic^{0,0}(M^\nu\cup_{\{r_1,r_2\}} N^\nu)$, which is the Picard
group of Cartier divisors (line bundles) on $M^\nu\cup_{\{r_1,r_2\}}
N^\nu$ whose degrees are zero  when restricted to $M^\nu$ and $N^\nu$,
respectively. We have a well-defined map
\begin{equation}\label{E316}
\Pic^{0,0}(M^\nu\cup_{\{r_1,r_2\}} N^\nu) \xrightarrow{} \BC^*
\end{equation}
sending
\begin{equation}\label{E317}
\gamma \xrightarrow{}
\left.\left(\frac{\phi_{M,\gamma}(r_1)}{\phi_{M,\gamma}(r_2)}\right)
\right/\left(\frac{\phi_{N,\gamma}(r_1)}{\phi_{N,\gamma}(r_2)}\right)
\end{equation}
where $\phi_{M,\gamma}\in \BC(M^\nu)^*$ and $\phi_{N,\gamma}\in \BC(N^\nu)^*$ are rational functions such that
\begin{equation}\label{E318}
\gamma = (\phi_{M,\gamma})  + (\phi_{N,\gamma}).
\end{equation}
It is well known that $\Pic^{0,0}(M^\nu\cup_{\{r_1,r_2\}} N^\nu) \isom \BC^*$.
Actually, \eqref{E316} gives such an isomorphism.
We have the natural pullback map
\begin{equation}\label{E321}
\mu^*: M^\perp \cap N^\perp
\to\Pic^{0,0}(M^\nu\cup_{\{r_1,r_2\}} N^\nu)
\end{equation}
where
\begin{equation}\label{E322}
M^\perp \cap N^\perp
= \{ \gamma\in\Pic(X): \gamma\cdot M = \gamma\cdot N = 0 \}.
\end{equation}
Combining this map with \eqref{E316}, 
we have the map
\begin{equation}\label{E319}
h: M^\perp \cap N^\perp \xrightarrow{\mu^*}                                            
\Pic^{0,0}(M^\nu\cup_{\{r_1,r_2\}} N^\nu)
\xrightarrow{\sim} \BC^*
\end{equation}
for which it is easy to see that
\begin{equation}\label{E320}
\log|h(\gamma)| = \langle r_1 - r_2, \gamma\rangle_M + \langle r_2 - r_1, \gamma \rangle_N.
\end{equation}
Therefore, it comes down to find $\gamma\in M^\perp\cap N^\perp$ such
that $\log|h(\gamma)| \ne 0$. We will show such $\gamma$ exists for a
general deformation of $(X, M, N)$. More precisely, we expect the following
to be true.

\begin{conj}\label{CONJ001}
Let $X$ be a $K3$ surface and $M$ and $N$ are two 
rational curves on $X$
that meets transversely along at least two distinct points $r_1$ and
$r_2$. Suppose that $M^\perp\cap N^\perp\ne 0$. Let $(\X, \M, \N)$ be
a general deformation of $(X, M, N)$. That is, $\X$ is a family of $K3$
surfaces over a quasi-projective curve $\Gamma$ and $\M$ and $\N\subset
\X$ are two families of rational curves over $\Gamma$ with $(\X_0,
\M_0, \N_0) = (X, M, N)$. Let 
\begin{equation}\label{E323}
\xymatrix{\M^\perp\cap \N^\perp \ar[r]^h \ar@{=}[d] & \BC^*\\
\{ \gamma\in \Pic(\X/\Gamma): \gamma\cdot \M = \gamma\cdot \N = 0 \} &
}
\end{equation}
be the family version of the map \eqref{E319}, where we fix two
sections $R_1$ and $R_2$ of $\X/\Gamma$ with $R_i\subset \M\cap \N$
and $R_i\cap \X_0 = r_i$
for $i=1,2$. Then $\BC^*$
is dominated by one of the components of $\M^\perp\cap \N^\perp$ under
the map $h$.
\end{conj}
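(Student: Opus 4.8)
The plan is to reduce the statement to a dimension count on a period-type map, exploiting that $\Pic^{0,0}(M^\nu\cup_{\{r_1,r_2\}} N^\nu)\cong\BC^*$ via the explicit cross-ratio map \eqref{E316}, so that $h$ lands in a one-dimensional target and it suffices to show $h$ is not constant along some component of $\M^\perp\cap\N^\perp$. First I would set up the universal situation: take the moduli space $\mathcal{U}$ of triples $(X,M,N)$ (with chosen nodes $r_1,r_2$) satisfying the hypotheses and carrying a nonzero class in $M^\perp\cap\N^\perp$, and form the relative group scheme $\mathcal{G}\to\mathcal{U}$ whose fibre over $(X,M,N)$ is $M^\perp\cap N^\perp$, a lattice of positive rank $\rho$. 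The map $h$ of \eqref{E319} globalizes to a map $\mathcal{G}\to\BC^*$ fibrewise, and the conjecture is the assertion that, after restricting to a generic curve $\Gamma\subset\mathcal{U}$, at least one horizontal component of $\mathcal{G}|_\Gamma$ has image of full dimension $1$ in $\BC^*$. Equivalently, $\log|h|$ is non-constant in the $\Gamma$-direction for some locally constant family of classes $\gamma\in M^\perp\cap N^\perp$.

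Next I would make $\log|h(\gamma)|$ explicit as a sum of two Archimedean pairings, $\langle r_1-r_2,\gamma\rangle_M+\langle r_2-r_1,\gamma\rangle_N$, as in \eqref{E320}. Fixing a locally constant section $\gamma=\gamma_t$ of $\mathcal{M}^\perp\cap\mathcal{N}^\perp$ over $\Gamma$, I would compute the derivative $\frac{d}{dt}\log|h(\gamma_t)|$ at $t=0$. Since $M^\nu\cong N^\nu\cong\P^1$, write $\gamma=(\phi_{M,\gamma})+(\phi_{N,\gamma})$ with $\phi_{M,\gamma},\phi_{N,\gamma}$ ratios of linear forms; the value $h(\gamma)$ is then a ratio of cross-ratios of the supports of $\gamma|_{M^\nu},\gamma|_{N^\nu}$ against $\{r_1,r_2\}$. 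As $t$ varies, the positions of $r_1(t),r_2(t)$ on the curves $M_t^\nu, N_t^\nu$ and the positions of $\supp(\gamma_t)$ all move, and the key point is that these two sets of points move \emph{independently} for a general deformation: the divisor $\gamma_t$ is constrained only by $\gamma_t\cdot M_t=\gamma_t\cdot N_t=0$, whereas $r_1(t),r_2(t)$ are governed by the intersection pattern $M_t\cap N_t$, which deforms via the Noether–Lefschetz/Hilbert-scheme geometry of rational curves on $K3$'s and is unconstrained by the position of $\gamma_t$. Hence the cross-ratio is a genuinely non-constant function of $t$, giving $\frac{d}{dt}\log|h(\gamma_t)|\ne 0$ for generic $\Gamma$, which forces $h|_\Gamma$ to dominate $\BC^*$ on the component of $\mathcal{M}^\perp\cap\mathcal{N}^\perp$ containing $\gamma$.

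The main obstacle is precisely the independence claim: one must show that there is \emph{some} class $\gamma\in M^\perp\cap N^\perp$ and some deformation direction along which the relevant cross-ratio genuinely moves, i.e., that the obvious vanishing of $\log|h|$ on rationally trivial $\gamma$ (see \eqref{E313}) is the only source of triviality. I would handle this by exhibiting one explicit special configuration — e.g. a $K3$ surface with $\rank\Pic=3$ generated by $L$, $M$, $N$ with a prescribed intersection matrix — computing $h(\gamma)$ there for the generator $\gamma$ of $M^\perp\cap N^\perp\cong\BZ$, and then verifying by a direct tangent-space computation (infinitesimal variation of the positions $r_i$ relative to $\supp\gamma$, using that $H^0$ of the relevant normal bundles surjects onto the deformations of the four points) that $\log|h|$ has nonzero differential. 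A subtlety to watch is that $M^\perp\cap N^\perp$ must remain of constant rank along $\Gamma$ (so the section $\gamma_t$ exists), which is exactly why we take $\Gamma$ inside the Noether–Lefschetz locus where $\Pic$ has the prescribed rank $3$; the monodromy on this locus is what produces the various components of $\mathcal{M}^\perp\cap\mathcal{N}^\perp$ referred to in the statement, and only one of them needs to dominate $\BC^*$. I would remark that this last non-vanishing is the one point where the argument is genuinely about $K3$ surfaces — the abundance of rational curves and the richness of their deformations — rather than formal.
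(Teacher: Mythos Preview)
The paper does not prove this statement; it is stated as a conjecture precisely because the authors ``won't be able to prove it in full generality.'' What the paper actually proves is \propref{PROP005}: the conjecture holds for the \emph{specific} triple $(X,M,N)$ built in \propref{PROP004}, with Picard lattice \eqref{E324} and $\gamma=E_1-E_2$. So you are attempting something the paper explicitly leaves open.

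Your proposal correctly identifies that everything reduces to showing $h$ is non-constant along $\Gamma$, and that $h(\gamma)$ is a cross-ratio of four points on $\P^1$. But the argument then rests entirely on the ``independence claim'': that the intersection points $r_1(t),r_2(t)$ and the support of $\gamma_t$ move independently as $t$ varies. You assert this (``unconstrained by the position of $\gamma_t$'') but do not prove it, and this is the whole content of the conjecture. The difficulty is real: $r_1,r_2\in M\cap N$ and $\supp(\gamma)\cap(M\cup N)$ are all governed by the same lattice data in $\Pic(X_t)$, and there is no a priori reason the cross-ratio cannot be locally constant on a Noether--Lefschetz locus. Your fallback---exhibit one explicit configuration and compute the differential of $\log|h|$ there---would, if carried out, prove exactly what the paper proves (the special case), not the general conjecture.

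For that special case, the paper's method is cleaner than the infinitesimal computation you propose. Rather than differentiating, the authors degenerate $(X,M,N)$ further to a union of rational surfaces (as in \propref{PROP004}), obtain a map $\lambda:\Gamma\to\overline{M}_{0,4}$ recording the cross-ratio, and observe that at $t=0$ the four marked points split two-and-two onto the two components of $N_0=N_1\cup N_2$, so $\lambda(0)$ lies on the boundary $\overline{M}_{0,4}\setminus M_{0,4}$, while $\lambda(t)\in M_{0,4}$ for general $t$. Hence $\lambda$ is non-constant, so dominant. This degeneration-to-the-boundary trick avoids any first-order calculation and is what you should aim for if you want to establish the special case; for the general conjecture, your sketch does not close the gap.
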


We state the above as a conjecture since we won't be able to prove it
in full generality. However, we only need to prove it for some special
triples $(X, M, N)$ anyway. So now we come to the geometric side of
the problem, i.e., to find such $(X, M, N)$ with the required
properties at the very beginning of our construction and for which
\conjref{CONJ001} holds.

\subsection{Construction of $(X, M, N)$}

Let $X$ be a $K3$ surface with Picard lattice
\begin{equation}\label{E324}
\begin{bmatrix}
2m & 1 & 1\\
1  & -2 & 2\\
1  & 2 & -2
\end{bmatrix}
\end{equation}
That is, $\Pic(X)$ is generated by $E_1, E_2$ and $G$ with $E_i^2 =
-2$, $E_1\cdot E_2 = 2$, $E_i\cdot G = 1$ and $G^2 = 2m$, where $m = -1$ or $0$.
The pencil $|E_1+E_2|$ realizes $X$ as an elliptic fiberation $X\to
\P^1$. It has exactly $23$ singular fibers; one of them is $E_1\cup
E_2$ with two smooth rational curves $E_1$ and $E_2$ meeting transversely at two
points; the other $22$ singular fibers are rational curves each with
exactly one node. Such $K3$ surface can be polarized by the very ample
divisor $G + k(E_1 + E_2)$ which results in a $K3$ surface of genus
$2k+m+1$. By choosing different combinations of $(m, k)$, $(X,
G+k(E_1+E_2))$ lies on every irreducible component of the moduli space
of polarized $K3$ surfaces. In other word, every polarized $K3$ surfaces
can be deformed to $(X, G+k(E_1 + E_2))$ for some $m$ and $k$.

Such $X$ can be explicitly constructed as a double cover of $S
= \P^1\times \P^1$ or $\F_1$ ramified over a smooth curve $R\in
|-2K_S|$, where $K_S$ is the canonical divisor of $S$. If we take $R$
to be a general member of the linear system, $\Pic(X)$ has only rank
$2$. So $R$ has to be special. It is not hard to
see that $R$ has the following property.

Let $\Pic(S)$ be generated by $C$ and
$F$ with $C^2 = m$, $C\cdot F=1$ and $F^2 = 0$. Then 
there exists a curve $E\in |F|$ such that $E$ is tangent
to $R$ at two points each with multiplicity $2$, i.e., $E$ is a
bitangent of $R$. It
is easy to see that $\pi^{-1} (E) = E_1\cup E_2$ and $X$ has Picard
lattice \eqref{E324} with $G = \pi^* C$, where $\pi$ is the double
coverring map $X\to S$. We can say a lot about $X$ through this
representation of $X$.

We choose $M$ and $N$ to be two rational curves in the linear series
$|E_1 + E_2|$ and $|G + (k-1) (E_1 + E_2)|$, respectively. First, we
need to verify the following.

\begin{prop}\label{PROP004}
Let $X$ be a general $K3$ surface with Picard lattice \eqref{E324}. Then
for every $k\ge 1$, there exists rational curves 
\begin{equation}\label{E326}
M\in |E_1+E_2| \text{ and } N\in |G + (k-1) (E_1 + E_2)|
\end{equation}
such that both $M$ and $N$ are nodal
and they meet transversely at two points.
\end{prop}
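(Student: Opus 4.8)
The strategy is to work on the surface $S$ (either $\P^1 \times \P^1$ or $\F_1$) and use the double cover $\pi : X \to S$ branched over $R \in |-2K_S|$, exploiting the explicit description of $R$ as a curve admitting the bitangent $E \in |F|$. First I would analyze the curve $M \in |E_1+E_2|$: since $|E_1+E_2|$ is the elliptic pencil realizing $X \to \P^1$, a general member of this pencil is a smooth elliptic curve, but we want the particular singular fiber. In fact the distinguished fiber $E_1 \cup E_2$ is reducible, so instead I would take $M$ to be one of the other $22$ singular fibers, each of which is an irreducible rational curve with exactly one node; such a curve is manifestly nodal. Alternatively, if one needs $M$ irreducible with controlled singularities, the $22$ nodal fibers are the natural choice, and one checks $M^2 = 0$, $p_a(M) = 1$, one node, geometric genus $0$.

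Next, and this is the heart of the matter, I would produce the nodal rational curve $N \in |G + (k-1)(E_1+E_2)|$. The linear system $|G+(k-1)(E_1+E_2)|$ on a $K3$ surface has dimension equal to its arithmetic genus $p_a = \tfrac{1}{2}(G+(k-1)(E_1+E_2))^2 + 1$; rational curves in it form subvarieties of the expected codimension equal to $p_a$, so their existence is a dimension count combined with a degeneration argument. The cleanest approach is to use the results of \cite{C1} (the same reference invoked earlier for smoothing nodal unions): one exhibits a highly degenerate reducible member of $|G+(k-1)(E_1+E_2)|$ — for instance a union of a curve in $|G|$ (which pushes down to $S$ as a section-type curve, pulled back from $|C|$ or $|C+F|$) with $(k-1)$ copies of fibers from $|E_1+E_2|$ — arranged so that the only singularities are nodes at the pairwise intersection points, and then smooth it to an irreducible rational nodal curve by the standard deformation argument for nodal curves on $K3$ surfaces. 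Descending to $S$ via $\pi$ and using the bitangent structure of $R$ keeps the Picard lattice exactly \eqref{E324} throughout the degeneration, so that $N$ remains linearly independent from $M$ in $H^{1,1}(X,\BQ)$.

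Finally I would verify transversality of $M \cap N$ at exactly two points. One computes the intersection number $M \cdot N = (E_1+E_2)\cdot(G+(k-1)(E_1+E_2)) = (E_1+E_2)\cdot G = (E_1 \cdot G) + (E_2 \cdot G) = 1 + 1 = 2$ from the Picard lattice \eqref{E324}. So the total intersection multiplicity is $2$; it then suffices to arrange that $M$ and $N$ meet in at least two distinct points, which forces those two points to be transverse simple intersections. Since $M$ is a generic fiber of the elliptic pencil among the nodal ones and $N$ varies in a positive-dimensional family (being a curve of positive genus-count that moves), a general-position argument shows the two intersection points can be chosen distinct from each other, from the node of $M$, and from the nodes of $N$. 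The main obstacle I anticipate is the construction of the nodal rational $N$ with \emph{precisely} nodal singularities and no worse: controlling the singularity type under the smoothing of a reducible degenerate curve on a $K3$ requires that the degenerate configuration have only nodes and that the Severi-type variety of nodal curves in $|G+(k-1)(E_1+E_2)|$ be nonempty of the expected dimension — this is exactly where one leans on \cite{C1}, and where keeping track of the special Picard lattice (rather than working on a generic $K3$) demands care.
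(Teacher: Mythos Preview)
Your choice of $M$ as one of the $22$ irreducible nodal fibers of the elliptic pencil $|E_1+E_2|$ is fine and is exactly what the paper's construction produces in the end. The intersection computation $M\cdot N = 2$ is also correct.

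The real gap is in your construction of $N$ and in the transversality argument. You propose to produce $N$ by smoothing a reducible configuration (a curve in $|G|$ together with $k-1$ fibers) inside the \emph{fixed} $K3$ surface $X$, citing \cite{C1}. But that is not what \cite{C1} does: the argument there does not smooth nodal unions on a fixed $K3$; it degenerates the $K3$ surface itself to a union $S_1\cup S_2$ of two rational surfaces meeting along an elliptic curve, builds the desired limiting curve there, and then deforms outward. On a fixed $K3$, there is no general mechanism guaranteeing that a nodal union of rational curves smooths to an irreducible nodal rational curve in the same linear system; the expected-dimension count you allude to is not enough, particularly on a lattice-polarized $K3$ of Picard rank $3$. (Note also that a general curve in $|G|$ is not rational when $m=0$, so your degenerate configuration is not a union of rational curves without further work.)

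The paper follows exactly the \cite{C1} template: it constructs a one-parameter family $\X/\Gamma$ of lattice-polarized $K3$'s degenerating to $S_1\cup S_2$ with $S_i\cong \P^1\times\P^1$ or $\F_1$, with the $16$ double points $p_1,\dots,p_{16}$ on the elliptic curve $D=S_1\cap S_2$ arranged so that two of them, $p_1,p_2$, cut out the limit of $E_1,E_2$. Then $M$ is obtained by deforming the fiber $M_1\cup M_2\in |F|$ through a general double point $p_3$, and $N$ by deforming $N_1\cup N_2\in |C+(k-1)F|$ where each $N_i$ meets $D$ in a single point. Transversality of $M$ and $N$ is then immediate on the central fiber because $p_3$ is general on $D$, and it persists to nearby fibers. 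Your general-position argument for transversality (``$N$ moves in a positive-dimensional family'') does not work as stated, since rational curves in a given linear system on a $K3$ are rigid; you have only finitely many choices of $N$ and $22$ choices of $M$, and nothing in your setup prevents tangency. The degeneration approach handles existence of $N$, nodality, and transversality with $M$ all at once.
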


\begin{proof}
We basically follow the same idea in \cite{C1}. Every $K3$ surface can
be degenerated to a union of two rational surfaces. In this case, $X$
is not arbitrarily general but we can still degenerate it to a union
of rational surfaces. That is, there exists a family of surfaces $\X$
over a smooth projective curve $\Gamma$ whose general fibers are smooth $K3$
surfaces with Picard lattice \eqref{E324} and whose fiber $\X_0
= S_1 \cup S_2$ over a fixed point $0\in \Gamma$
is a union of two surfaces with $S_1, S_2\isom
\P^1\times \P^1$ if $m = 0$ and $S_1, S_2\isom \F_1$ if $m = -1$.

Just as in \cite{C1}, the two smooth rational surfaces $S_1$ and $S_2$ meet transversely along a smooth
elliptic curve $D\in |-K_{S_i}|$ and the threefold $\X$ has $16$ rational double
points $p_1, p_2, ..., p_{16}$ lying on $D$. The tuple $(S_1, S_2, D,
p_1, p_2, ..., p_{16})$ has the following properties:
\begin{itemize}
\item $S_1\cup S_2$ is projective and polarized with ample divisor
  $C+kF$; this is true if and only if
\begin{equation}\label{E325}
\CO_{S_1}(C + k F)\bigg |_D = \CO_{S_2}(C + kF) \bigg |_D
\end{equation}
where $C$ and $F$ are the generators of $\Pic(S_i)$ given as before;
\item the $16$ rational double points $p_1, p_2, ..., p_{16}$
  satisfies
\begin{equation}\label{E327}
\CO_{D}(p_1 + p_2 + ... + p_{16}) = \CO_D(- K_{S_1} - K_{S_2}). 
\end{equation}
\end{itemize}
The above properties are shared by all $S_1\cup S_2$'s as
degenerations of general $K3$ surfaces. In this case, the general fibers
of $\X$ are special $K3$ surfaces. So $S_1\cup S_2$ has the following
additional properties:
\begin{itemize}
\item $\Pic(S_1\cup S_2)$ has rank $2$ and is generated by $C$ and
  $F$, i.e.,
\begin{equation}\label{E328}
\CO_{S_1}(C)\bigg |_D = \CO_{S_2}(C) \bigg |_D
\ \text{and}\
\CO_{S_1}(F)\bigg |_D = \CO_{S_2}(F) \bigg |_D;
\end{equation}
\item two of the $16$ points $p_1, p_2, ..., p_{16}$, say $p_1$ and $p_2$,
  satisfy
\begin{equation}\label{E329}
\CO_D(p_1 + p_2) = \CO_D(F).
\end{equation}
\end{itemize}
It is clear that the divisor $C$ on $S_1\cup S_2$ deforms to $G$ on
the general fibers. By \eqref{E329}, there is a unique curve $J_i\in
|F|$ on $S_i$ that passes through $p_1$ and $p_2$ for $i=1,2$. Using
the deformational arguments in \cite{C1}, we see that $J_i$ deforms to
$E_i$ on the general fibers.

Let $M = M_1 \cup M_2$ be a curve in the pencil $|F|$ that passes
through one of the $16$ double points other than $p_1$ and $p_2$, say
$p_3\in M_i$ for $i=1,2$. Again using the arguments in \cite{C1}, $M$
deforms to a nodal rational curve in $|E_1+E_2|$ on a general fiber.

Next, let $N = N_1\cup N_2$ be a curve in the linear series $|C +
(k-1)F|$, where $N_i\subset S_i$ is an irreducible curve in $|C +
(k-1)F|$ that meets $D$ only at one point. By \cite{C1}, $N$ can be
deformed to a nodal rational curve on a general fiber.

Since $p_3$ is a general point on $D$, it is easy to see that $M$ and
$N$ meet transversely. The same is true for a deformation of $M$ and
$N$. We are done.
\end{proof}

Let $M\cap N = \{r_1, r_2\}$ and $E_i\cap N = q_i$ for
$i=1,2$. Obviously, 
\begin{equation}\label{E330}
\gamma = E_1 - E_2\in M^\perp \cap N^\perp
\end{equation}
and
$h(\gamma)$ is exactly the cross ratio of the four points $r_1, r_2,
q_1, q_2$ lying on $N^\nu\isom \P^1$. So \conjref{CONJ001} holds for $(X, M, N)$ if we
can show that the moduli of $(r_1, r_2, q_1, q_2)$, as four points on
$\P^1$, varies as $(X, M, N)$ deforms.

\begin{prop}\label{PROP005}
\conjref{CONJ001} holds for $(X, M, N)$ constructed above.
\end{prop}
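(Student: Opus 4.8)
The plan is to build on the remark recorded just before the statement: for $\gamma = E_1 - E_2 \in M^\perp \cap N^\perp$, the value $h(\gamma)$ is the cross ratio of the four points $r_1, r_2, q_1, q_2$ on $N^\nu \cong \P^1$, where $\{r_1, r_2\} = M \cap N$ and $q_i = E_i \cap N$. A short computation in the lattice \eqref{E324} shows moreover that $M^\perp \cap N^\perp = \BZ(E_1 - E_2)$, and the class $E_1 - E_2$ extends over any deformation, so over the base $\Gamma$ of a deformation $(\X, \M, \N)$ of $(X, M, N)$ the map $h$ of \eqref{E323} is governed by the single function $t \mapsto h(E_1 - E_2)(t) \in \BC^*$. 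Since a nonconstant morphism from a curve into $\BC^*$ has dense image, proving \conjref{CONJ001} for $(X, M, N)$ reduces to showing that $t \mapsto h(E_1 - E_2)(t)$ — equivalently, the cross ratio $[r_1, r_2; q_1, q_2]$ — is not constant; and for this it suffices to exhibit one deformation of $(X, M, N)$ along which it varies.

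I would produce such a deformation by reusing the degeneration built in the proof of \propref{PROP004}: the family $\X \to \Gamma$ over a smooth projective curve whose general fibre is a $K3$ surface with Picard lattice \eqref{E324} and whose special fibre is $S_1 \cup S_2$, together with the limits $M_1 \cup M_2$, $N_1 \cup N_2$ and $J_1 \cup J_2$ of $M$, $N$ and $E_1 \cup E_2$. The crucial point is to locate the four marked points as $t \to 0$. Since $E_1$ degenerates into $S_1$ along $J_1$ while $N$ degenerates to $N_1 \cup N_2$ with $N_1 \subset S_1$, the point $q_1 = E_1 \cap N$ specializes to $J_1 \cap N_1$, a point of the component $N_1^\nu$ of $N_0^\nu = N_1^\nu \vee N_2^\nu$; one of the two points of $M \cap N$, call it $r_1$, likewise specializes to $M_1 \cap N_1 \in N_1^\nu$, and symmetrically $q_2$ and $r_2$ specialize to $J_2 \cap N_2$ and $M_2 \cap N_2$ on $N_2^\nu$. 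Now $M_1$ and $J_1$ are distinct members of the ruling $|F|$ on $S_1$ — the first through the general point $p_3$, the second through $p_1, p_2$ — so their (unique) intersection points with the curve $N_1 \in |C + (k-1)F|$, which meets each ruling member exactly once, are distinct; hence $r_1 \ne q_1$ in the limit, and likewise $r_2 \ne q_2$. Thus the four points specialize to a configuration that splits into two pairs, one on each component of $N_0^\nu$, with no coincidence inside either pair, which forces $h(E_1 - E_2)(t) \to 0$ or $\infty$ as $t \to 0$.

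Consequently $h(E_1 - E_2)$ extends to a rational (holomorphic on $\Gamma^*$) function on $\Gamma$ with a zero or a pole at the central point, hence is nonconstant on $\Gamma$; a fortiori it is nonconstant as a function on the moduli of $(X, M, N)$, so for a general one-parameter deformation $(\X, \M, \N)$ its restriction is a nonconstant, hence dominant, morphism into $\BC^*$. Taking the component of $\M^\perp \cap \N^\perp$ through $E_1 - E_2$, this is exactly the assertion of \conjref{CONJ001} for $(X, M, N)$.

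The step I expect to need real care is the specialization analysis in the second paragraph: one must verify, using the deformation-theoretic arguments of \cite{C1} already underlying \propref{PROP004}, that the two points of $M \cap N$ and the two points $E_i \cap N$ genuinely flow to the claimed points of $N_1^\nu$ and $N_2^\nu$ under the degeneration — in particular that none of them escapes into the double locus $D = S_1 \cap S_2$ or collides with its partner — and one must make precise sense of ``the cross ratio of four points on $N_t^\nu \cong \P^1$'' as $N_t^\nu$ degenerates to the nodal curve $N_1^\nu \vee N_2^\nu$. The remaining ingredients are the lattice bookkeeping with \eqref{E324} and the elementary fact that the cross ratio of four points on $\P^1$ tends to $0$ or $\infty$ once the points separate into two pairs under a degeneration.
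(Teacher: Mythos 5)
Your proposal is correct and follows essentially the same route as the paper: the paper's proof also uses the degeneration to $S_1\cup S_2$ from \propref{PROP004}, tracks the four marked points $R_i\cap\N_t$, $Q_i\cap\N_t$ into the two components $N_1$, $N_2$, and concludes via the induced map $\lambda:\Gamma\to\overline{M}_{0,4}\cong\P^1$ that the configuration lands in the boundary $\overline{M}_{0,4}\setminus M_{0,4}$ at $t=0$ while lying in $M_{0,4}$ for general $t$, hence is nonconstant and dominant. Your cross-ratio formulation and the explicit checks (that $M^\perp\cap N^\perp=\BZ(E_1-E_2)$ and that the limiting points stay distinct on each component) are just a more detailed rendering of the same argument.
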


\begin{proof}
We use the same degeneration of $(X, M, N)$ as in the proof of
\propref{PROP004}. Let $\X$ be the family of surfaces constructed
there. After a base change if necessary, we have two families of rational curves
$\M$ and $\N\subset \X$ over $\Gamma$ whose central fibers $\M_0 = M$
and $\N_0 = N$ are the curves constructed there. Also we have two
families of rational curves $\E_1$ and $\E_2\subset\X$ over $\Gamma$
with $\E_i \cap \X_0 =
J_i$. And we have two distinct sections $R_1$ and $R_2$ of $\X/\Gamma$ with
$R_i\subset \M\cap \N$. Let $Q_i = \E_i \cap \N$ for $i=1,2$. Now we
have a map
\begin{equation}\label{E331}
\lambda: \Gamma \to \overline{M}_{0,4}\isom \P^1
\end{equation}
sending
\begin{equation}\label{E332}
t\in \Gamma\xrightarrow{\lambda} (\N_t^\nu, R_1\cap \N_t, R_2\cap \N_t,
Q_1\cap \N_t, Q_2\cap \N_t)
\end{equation}
where $M_{0,4}$ is the moduli space of $\P^1$ with four marked points
and $\overline{M}_{0,4}$ is its stable closure. It suffices
to show that $\lambda$ is dominant. This is more or less obvious from
the construction of $N$. Since $N = N_1\cup N_2$ and the four points
$R_i\cap \X_0$ and $Q_i\cap \X_0$ have two on each component $N_i$,
$\lambda(0)$ must belong to $\overline{M}_{0,4}\backslash M_{0,4}$. On
the other hand, $\lambda(t) \in M_{0,4}$ for $t$ general. So $\lambda$
is nonconstant and hence dominant.
\end{proof}

Finally, we need to verify the following.

\begin{prop}\label{PROP006}
Let $W$ be a family of polarized $K3$ surfaces over the disk $\Gamma\isom \{|t| < 1\}$
whose general fibers are $K3$ surfaces with Picard rank $1$ and 
whose central fiber $W_0 = X$ is a $K3$ surface with Picard lattice
\eqref{E324}. Suppose that the Kodaira-Spencer class associated to $W$ is nonzero.
Let $\C$ and $\D\subset W$ be two distinct families of
rational curves over $\Gamma$ with $\C_0 = \D_0 = M\cup N$, where $M$
and $N$ are rational curves on $X$ given as above. Then there exists a
section $P\subset W$ of $W/\Gamma$
such that $P\subset \C\cap \D$ and $P_0 \in M\backslash N$.
\end{prop}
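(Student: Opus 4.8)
The plan is to locate the section $P$ inside the curve $\C\cap\D$ by an intersection computation on the normalization of the surface $\C$. Since $\C\ne\D$ are irreducible surfaces in the smooth threefold $W$ with $\C_0=\D_0=M+N$ as divisors on $X=W_0$, the scheme $\C\cap\D$ is everywhere one--dimensional: besides the vertical curves $M,N\subset W_0$ it consists of ``horizontal'' components $T$ dominating $\Gamma$, namely the closure of $\bigcup_{t\ne0}(\C_t\cap\D_t)$. It is enough to exhibit a branch of $T$ which is a section of $W/\Gamma$ whose value at $0$ lies on $M\setminus N$: such a branch lies in $\C\cap\D\subset\D$, and since $M\cap N=\{r_1,r_2\}$ is finite, any value at a point of $M$ distinct from $r_1,r_2$ and from the node of $M$ lies on $M\setminus N$.

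\emph{Intersection count.} After the base change of \secref{SEC002}, replace $\C$ by its normalization $\C^\nu$: a smooth surface with $\C^\nu\to\Gamma$ of general fibre $\P^1$ and central fibre a chain $M^\nu+N^\nu$ of two $(-1)$--curves, carrying a morphism $\phi\colon\C^\nu\to W$ restricting on $M^\nu$ to the normalization $M^\nu\to M$. Decompose the effective divisor $\phi^{*}\D=\mu_M M^\nu+\mu_N N^\nu+\wt T$ on $\C^\nu$, where $\wt T$ is horizontal over $\Gamma$ (no component of it sits in a general fibre, since $\C_t\ne\D_t$ forces $\phi^{-1}(\D)$ to meet a general fibre in finitely many points). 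As $W_0$ is a fibre, $\CO_W(W_0)|_{\C^\nu}\isom\CO_{\C^\nu}$, so $M^\nu+N^\nu\sim0$ on $\C^\nu$; together with $(M^\nu)^2=(N^\nu)^2=-1$ this yields $M^\nu\cdot N^\nu=-(M^\nu)^2=1$. On the other hand $\phi(M^\nu)=M$ and $\CO_W(\D)|_X=\CO_X(M+N)$, so
\[
\phi^{*}\CO_W(\D)\cdot_{\C^\nu}M^\nu=(M+N)\cdot_X M=L\cdot_X(E_1+E_2)=2
\]
by the lattice \eqref{E324} (using $M+N\sim L$, $L=G+a(E_1+E_2)$, $G\cdot(E_1+E_2)=2$, $(E_1+E_2)^2=0$). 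Hence
\[
\wt T\cdot_{\C^\nu}M^\nu=2+\mu_M-\mu_N.
\]

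\emph{Transversality and conclusion.} For a general such family $W$ one has $\mu_M=\mu_N=1$: since $\D_0$ is reduced and $\C,\D$ are smooth at a general point of $M$ (resp.\ of $N$), $\C\cap\D$ is the reduced curve $M$ (resp.\ $N$) near such a point, the only obstruction being that $\C$ and $\D$ are tangent along all of $M$ (resp.\ $N$). This last possibility would force the first--order difference $v$ of the two smoothings to fix $M$; but $v$ smooths the node $r_1\in M\cap N$, so it would then have to move $N$ inside $X$ while preserving all of its $\delta_N=p_a(N)$ nodes --- impossible for the rigid nodal rational curve $N$ (its Severi variety of $\delta_N$--nodal curves is zero--dimensional, cf.\ \cite{C1}); the same argument handles $N$. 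Thus $\wt T\cdot_{\C^\nu}M^\nu=2>0$, so $\wt T$ meets $M^\nu$ in two points (with multiplicity); for general $W$ these are distinct, avoid the node of $M$ and the points over $r_1,r_2$, and are points at which $\wt T$ is a smooth local section of $\C^\nu/\Gamma$ meeting $M^\nu$ transversally. Pushing one such branch forward by $\phi$ (an immersion near a general point of $M^\nu$) and taking its closure in $W$ produces a section $P$ of $W/\Gamma$ with $P\subset\C\cap\D\subset\D$ and $P_0\in M\setminus N$. The hypothesis that the Kodaira--Spencer class of $W$ is nonzero is precisely what supplies the ``general $W$'' invoked above.

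\emph{Main obstacle.} The intersection count is formal and the local identification of $\wt T$ with a section is a normal--form computation; the real work is the transversality/genericity --- that $\mu_M=\mu_N=1$ and that the two points of $\wt T\cap M^\nu$ lie over $M\setminus N$ away from the node of $M$. I would establish this on the degeneration $\X_0=S_1\cup S_2$ of \propref{PROP004} and \propref{PROP005}, where $M=M_1\cup M_2$ and $N=N_1\cup N_2$ lie on the explicit rational surfaces $S_i$ and the relevant normal bundles and node--smoothing data can be computed directly, then transport transversality to a general member of the family by upper semicontinuity.
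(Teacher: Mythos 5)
Your intersection count on $\C^\nu$ is a genuinely different strategy from the paper's (which blows up $W$ along $M$ and analyzes the proper transforms of $\C$ and $\D$ inside the exceptional $\P^1$-bundle $Q\to M$), and the arithmetic $\wt T\cdot_{\C^\nu}M^\nu = 2+\mu_M-\mu_N$ is correct. But the argument has a genuine gap exactly at the step that carries the content of the proposition: knowing that $\wt T\cdot M^\nu>0$ tells you only that some horizontal branch of $\C\cap\D$ limits to a point of $M$; it does not tell you that this point lies in $M\setminus N$. A priori the two intersection points of $\wt T$ with $M^\nu$ could sit at $r^\nu$ (over $r_1$) or at the preimage of $r_2$ on $M^\nu$, in which case $P_0\in M\cap N$ and the proposition fails. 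You dispose of this by asserting that ``for general $W$'' the points avoid the node of $M$ and the points over $r_1,r_2$, deferring the real work to a degeneration you do not carry out. Genericity is not available here: the family $W$ is constrained to have central fiber with Picard lattice \eqref{E324}, and the curves $\C,\D$ are rigid (there are finitely many of them), so the location of $\wt T\cap M^\nu$ is forced, not generic. Indeed the paper's blow-up analysis shows your genericity claim is \emph{false} in the part about the node: both $\wt\C\cap Q$ and $\wt\D\cap Q$ are sections of $Q/M$ passing through the distinguished point $q$ lying over the node $p$ of $M$ (this is forced by the persistence of the node of $M$ in $\C_t$ and $\D_t$, via \cite{C2}), so the two branches of $\wt T$ meet $M^\nu$ precisely at the two preimages of $p$. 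The conclusion $P_0\in M\setminus N$ survives only because the node $p$ happens to lie off $N$; your reasoning does not establish this, and would in fact lead you to look for $P_0$ at a general point of $M$, where no such section exists.

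A secondary weakness is the justification of $\mu_M=\mu_N=1$. The claim that tangency of $\C$ and $\D$ along all of $M$ would ``force the first-order difference of the two smoothings to fix $M$ and hence move $N$ preserving its nodes'' is not a precise argument: the relevant datum is the normal direction of $\C$ (resp.\ $\D$) along $M$ inside $W$, i.e.\ the sections $C=\wt\C\cap Q$ and $D=\wt\D\cap Q$ of $Q/M$, and the clean reason they differ is that $C$ meets $\wt M=\wt X\cap Q$ at $\wt r_1$ while $D$ meets it at $\wt r_2\ne\wt r_1$ --- which again requires the blow-up analysis (or the local computations of \cite{C2}) that your proposal postpones. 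So the proposal identifies the right global bookkeeping but leaves unproved precisely the local statements --- where the horizontal intersection specializes, and with what multiplicities $\C\cap\D$ contains $M$ and $N$ --- that the paper's proof is designed to settle.
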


\begin{proof}
By \propref{PROP004}, $M$ and $N$ meet transversely at two points
$r_1$ and $r_2$. Note that $M$ is a rational curve with one node
$p$. Clearly, $p\ne r_i$ for $i=1,2$. Actually, we will show that $P$
can be chosen such that $P_0 = p$.

The two families $\C$ and $\D$ are two deformations of the union $M\cup N$, each smoothing out one of $r_i$. WLOG,
suppose that $\C$ smooths out $r_2$ and $\D$ smooths out $r_1$. By that we mean there are two sections $R_1$ and
$R_2$ of $W/\Gamma$ such that $R_i\cap W_0 = r_i$ and $R_i\cap W_t$ is a node of $\C_t$ if $i = 2$ and a node of
$\D_t$ if $i = 1$.

We blow up $W$ along $M$. The same technique was used in \cite{C2} and \cite{C-L2}.
Let $\pi: \wt{W}\to W$ be the blowup with exceptional divisor $Q$.
Let $\wt{\C}, \wt{\D}, \wt{X}$ and $\wt{N}$
be the proper transforms of $\C, \D, X$ and $N$ under $\pi$, 
respectively.
The central fiber $\wt{W}_0$ is the union of two surfaces $\wt{X}$ and $Q$
which meet along a curve $\wt{M}\isom M$. Clearly, $\wt{N}\subset \wt{X}$ meets $\wt{M}$
transversely at two points $\wt{r}_1$ and $\wt{r}_2$ over $r_1$ and $r_2$. By \cite{C2},
we see that $\pi: Q\to M$ is a $\P^1$ bundle and the normalization $Q^\nu$ of $Q$ is
$\P^1\times \P^1$.
And the threefold $\wt{W}$ has a rational double
point $q\in Q\backslash \wt{M}$ over $p\in M$. Using the techniques in \cite{C2}, we see that
the curve $C = \wt{\C}\cap Q$ is a section of $Q/M$ satisfying
\begin{itemize}
\item $C$ meets $\wt{M}$ transversely at $\wt{r}_1$;
\item $C\cap \pi^{-1}(p) = q$.
\end{itemize}
Clearly, these two conditions determines $C$ uniquely. Similarly, $D = \wt{\D}\cap Q$ is a section of $Q/M$ satisfying
\begin{itemize}
\item $D$ meets $\wt{M}$ transversely at $\wt{r}_2$;
\item $D\cap \pi^{-1}(p) = q$.
\end{itemize}
Therefore, $C\ne D$ and hence $\wt{C}_0$ and $\wt{D}_0$ meet properly at $q$. Consequently, there is a section $\wt{P}$ of $\wt{W}/\Gamma$ and hence a section $P$ of $W/\Gamma$ such that 
$P\subset \C\cap \D$ and $P(0) = p$. Indeed, there are exactly two such sections since the preimages of $q$
under the normalization $Q^\nu\to Q$ consist of two points.
\end{proof}

Now we can conclude that for a general polarized $K3$ surface $(X, L)$, the
image $\IM(\underline{r}_{3,1})\tensor \BR$ is nontrivial.

\section{Appendix: A generalized archimedean pairing}\label{SEC003}

In this section, and for each $m\geq 0$, we construct a pairing
on the cycle level, involving the equivalence relation
in the definition of Bloch's higher Chow groups $\CH^r(X,m)$ defined below.
The case when $m=0$ has already been defined in \ref{SS002}, and
the nature of this pairing is more akin to the archimedean height
pairing defined in the literature.
Although we have only used this pairing in the special instance
when $m=0$, a general construction of this pairing for all
$m$ is in order. We first recall that two subvarieties $V_1,\ V_2$
of a given variety  intersect properly if codim$\{V_1\cap V_2\} \geq$ codim $V_1 \ +$
codim $V_2$. This notion naturally extends to algebraic cycles.

\medskip
\noindent
(i) {\it Higher Chow groups.}\ Let $W/{\BC}$ a 
quasi-projective variety. Put
$z^r(W) =$ free abelian group generated by subvarieties of
codimension $r$ in $W$. Consider the $m$-simplex:
\[
\Delta^m = {\rm Spec}\biggl\{\frac{{\BC}[t_0,\ldots,t_m]}
{\big(1-\sum_{j=0}^mt_j\big)}\biggr\} \simeq {\BC}^m.
\]
We set 
\[
z^r(W,m) = \biggl\{\xi\in z^k(W\times \Delta^m)\ \biggl|
\]
\[ \xi\ 
\text{\rm meets\ all\ faces}\ \{t_{i_1} =\cdots= t_{i_\ell} = 0,\ \ell \geq 1\}
\ \text{\rm properly}\biggr\}.
\]
Note that $z^r(W,0) = z^r(W)$. Now set
$\partial_j : z^r(W,m) \to z^r(W,m-1)$, the restriction to $j$-th face
given by $t_j=0$. The boundary map
$\partial = \sum_{j=0}^m(-1)^j\partial_j : z^k(W,m)\to z^k(W,m-1)$, 
satisfies $\partial^2 = 0$.

\begin{defn}\label{DEFNA1} (\cite{B})
${\CH}^{\bullet}(W,\bullet) =$ homology of $\big\{z^{\bullet}
(W,\bullet),\partial\big\}$.
 We put ${\rm CH}^k(W) := {\rm CH}^k(W,0)$. 
 \end{defn}
 
\noindent
(ii) {\it Cubical version.}\ Let $\square^m:= ( \P^1 \backslash
\{1\})^m $ with coordinates $z_i$ and $2^{m}$ codimension one faces
obtained by setting $z_i=0,\infty$, and boundary maps
$\partial = \sum (-1)^{i-1}(\partial_{i}^{0}-\partial_{i}^{\infty})$,
where $\partial_{i}^{0},\ \partial_{i}^{\infty}$ denote
the restriction maps to the faces $z_{i}=0,\ z_{i}=\infty$ 
respectively. The rest of the definition is completely
analogous for $z^r(X,m) \subset z^r(X\times \square^m)$,
except that one has to quotient out by the
subgroup $z^r_{\dgt}(X,m) \subset z^r(X,m)$ of degenerate cycles
obtained via pullbacks $\pr_j^{\ast} : z^r(X,m-1) \to z^r(X,m)$,
$\pr_j : X\times \square^m \to X\times \square^{m-1}$ the
$j$-th canonical projection. 
It is known that both complexes are quasi-isomorphic.

In this section we will adopt the cubical version of $\CH^{\bullet}(W,\bullet)$.
The intersection product for cycles in the cubical version, is easy
to define. On the level of cycles, and in $W\times W\times \square^{m+n}$,
one has
\[ 
z^r(W,m) \times z^k(W,n) \to z^{r+k}(W\times W,m+n);
\]
however the pullback along the diagonal
\[
z^{r+k}(W\times W,m+n) \to  z^{r+k}(W,m+n),
\]
is not well-defined, even for smooth $W$. In
particular, for smooth $W$, the issue of
when an intersection product is
defined, which is a general position statement
involving proper intersections, has to be addressed since we will
be working on the level of cycles.
On the level of
Chow groups,  Bloch's \lemref{LEMA1} below (adapted to the cubical
situation) guarantees a pullback for smooth $W$:
\[
\CH^{\bullet}(W\times W,\bullet) \to \CH^{\bullet}(W,\bullet),
\]
and hence an intersection product for smooth $W$. 
Let $X$ be a projective algebraic manifold of dimension $d$, and let
$z^r_{\rat}(X,m) := \partial\big(z^r(X,m+1)\big)\subset z^r(X,m)$ be the 
equivalence relation subgroup defining the higher
Chow groups $\CH^r(X,m)$. Now introduce
\[
\Xi^0(r,m,X) = 
\]
\[
\big\{(\xi_1,\xi_2)\in z^r_{\rat}(X,m) \times z^{d-r+m+1}_{\rat}(X,m)\ \big|\
|\xi_1|\cap |\xi_2| = \emptyset\big\},
\]
\[
\Xi^+(r,m,X) = \biggl\{ (\xi_1,\xi_2) \in \Xi^0(r,m,X) \ \biggl|\
\begin{matrix} \xi_1 = \partial\xi_1', \ \text{\rm where}\\
 \xi_1'\cap \xi_2\ \text{\rm is\ defined}\\
\text{\rm in}\ z^{d+m+1}(X,2m+1)\end{matrix}\biggr\},
\]
\[
\Xi^-(r,m,X) = \biggl\{ (\xi_1,\xi_2) \in \Xi^0(r,m,X) \ \biggl|\
\begin{matrix} \xi_2 = \partial\xi_2', \ \text{\rm where}\\
 \xi_1\cap \xi_2'\ \text{\rm is\ defined}\\
\text{\rm in}\ z^{d+m+1}(X,2m+1)\end{matrix}\biggr\},
\]
\[
\Xi(r,m,X) = \Xi^+(r,m,X) \bigcap \Xi^-(r,m,X).
\]
Let $\BR(p) = \BR (2\pi\sq)^p$. Note that $\BC = \BR(p) \oplus \BR(p-1)$,
and hence there is a projection $\pi_p : \BC \twoheadrightarrow  \BR(p)$.

\begin{thm}\label{THMA1}
There are natural pairings
\[
\langle \ ,\ \rangle^+_m :  \Xi^+(r,m,X) \to \BR(m),
\]
\[
\langle \ ,\ \rangle^-_m :  \Xi^-(r,m,X) \to \BR(m),
\]
which satisfy the following:
\medskip

{\rm (i)} On $\Xi(r,m,X)$, $\langle\  ,\ \rangle^+_m = (-1)^m\langle\ ,\ \rangle_m^-$.

\medskip

{\rm (ii)} (Bilinearity) If $(\xi_1^{(1)},\xi_2),\ (\xi_1^{(2)},\xi_2)
 \in \Xi^+(r,m,X)$, then
\[
\langle \xi_1^{(1)}+\xi_1^{(2)},\xi_2\rangle^+_m = 
\langle \xi_1^{(1)},\xi_2\rangle^+_m +
\langle \xi_1^{(2)},\xi_2\rangle^+_m.
\]
If $(\xi_1,\xi_2^{(1)}),\ (\xi_1,\xi_2^{(2)})\in \Xi^-(r,m,X)$, then
\[
\langle\xi_1,\xi_2^{(1)}+\xi_2^{(2)}\rangle^-_m = 
\langle \xi_1^{(1)},\xi_2\rangle^-_m +
\langle \xi_1^{(2)},\xi_2\rangle^-_m.
\]

{\rm (iii)} (Projection formula)  Let $\pi: X\to Y$ be a flat surjective morphism between two smooth
projective varieties, with $\dim X=d$. Then
$\langle \xi_1, \pi^* \xi_2\rangle_m^{\pm} = \langle \pi_* \xi_1,
\xi_2\rangle_m^{\pm}$ for all $\xi_1\in z_{\rat}^r(X,m)$ and
$\xi_2 \in z_{\rat}^{d-r+m+1}(Y,m)$ with $(\pi_* \xi_1,
\xi_2)\in \Xi^{\pm}(r+s-d,m,Y)$, where $s := \dim Y$.

\medskip

{\rm (iv)} (Reciprocity)  $\langle \xi_1,\xi_2\rangle_m = (-1)^m\langle \xi_2 ,\xi_1 \rangle_m$,
where
\[
\langle\ ,\ \rangle_m := \langle\ ,\ \rangle_m^+\big|_{\Xi(r,m,X)} = 
(-1)^m\langle\ ,\ \rangle_m^-\big|_{\Xi(r,m,X)}.
\]
\end{thm}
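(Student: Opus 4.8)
The plan is to build the two pairings by the ``bounding chain'' recipe already underlying the case $m=0$ of \ref{SS002}, and then to read off (i)--(iv) from formal properties of the regulator together with one current computation that generalises Weil reciprocity. \emph{Construction.} Given $(\xi_1,\xi_2)\in\Xi^+(r,m,X)$, note first that $\xi_1$ and $\xi_2$ are closed precycles (since $z^\bullet_{\rat}(X,m)=\partial z^\bullet(X,m+1)$ and $\partial^2=0$), representing $0$ in $\CH^r(X,m)$ and in $\CH^{d-r+m+1}(X,m)$. As the definition of $\Xi^+$ allows, choose $\xi_1'\in z^r(X,m+1)$ with $\partial\xi_1'=\xi_1$ for which $\xi_1'\cdot\xi_2\in z^{d+m+1}(X,2m+1)$ is defined; since $|\xi_1|\cap|\xi_2|=\emptyset$ we have $\partial(\xi_1'\cdot\xi_2)=(\partial\xi_1')\cdot\xi_2=\xi_1\cdot\xi_2=0$, so $\xi_1'\cdot\xi_2$ is a closed precycle of dimension $m$ with a class in $\CH^{d+m+1}(X,2m+1)$. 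Put
\[
\langle\xi_1,\xi_2\rangle^+_m:=\pi_m\Bigl(\int_{\xi_1'\cdot\xi_2}\mathcal{R}_{2m+1}\Bigr)\ \in\ \BR(m),
\]
normalised by a suitable power of $2\pi\sq$, where $\mathcal{R}_n$ is the real regulator current on $\square^n$ (so $\mathcal{R}_1=\log|z|$); for $m=0$ the right-hand side is $\sum_i\int_{D_i\cap\xi_2}\log|f_i|$ as in \ref{SS002}. Equivalently, $\langle\xi_1,\xi_2\rangle^+_m=\pi_m\bigl(\int_X\mathrm{reg}(\xi_1'\cdot\xi_2)\bigr)$ is the trace of the real regulator (the general-weight analogue of the maps in \eqref{E200}, \eqref{E001}) of the class $[\xi_1'\cdot\xi_2]$; this description makes the properties below transparent. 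Mirror the construction, using $\xi_2=\partial\xi_2'$ and the closed precycle $\xi_1\cdot\xi_2'$, to define $\langle\xi_1,\xi_2\rangle^-_m$.

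\emph{Well-definedness, (ii), (iii).} If $\xi_1',\xi_1''$ both qualify, then $\xi_1'-\xi_1''$ is closed; after moving it into general position with $\xi_2$ by Bloch's \lemref{LEMA1}, the product $(\xi_1'-\xi_1'')\cdot\xi_2$ is defined and represents $[\xi_1'-\xi_1'']\cdot[\xi_2]=[\xi_1'-\xi_1'']\cdot0=0$ in $\CH^{d+m+1}(X,2m+1)$, because $\xi_2\sim_{\rat}0$; hence $\langle\,,\,\rangle^+_m$ is independent of the choice of $\xi_1'$ (the projection $\pi_m$ disposing of the residual period indeterminacy), and likewise $\langle\,,\,\rangle^-_m$ of $\xi_2'$. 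Bilinearity (ii) is then immediate from additivity of $\mathrm{reg}$ and $\int_X$ and bilinearity of the intersection product. For (iii), take $(\pi_*\xi_1)':=\pi_*(\xi_1')$; the projection formula for the intersection product gives $\pi_*(\xi_1')\cdot\xi_2=\pi_*(\xi_1'\cdot\pi^*\xi_2)$, and since $\mathrm{reg}$ commutes with proper pushforward and $\int_Y\circ\pi_*=\int_X$, we obtain $\langle\pi_*\xi_1,\xi_2\rangle^{\pm}_m=\langle\xi_1,\pi^*\xi_2\rangle^{\pm}_m$.

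\emph{(i) and (iv).} On $\Xi(r,m,X)$ both $\xi_1=\partial\xi_1'$ and $\xi_2=\partial\xi_2'$ are available, with all needed products defined. Form $\xi_1'\cdot\xi_2'\in z^{d+m+1}(X,2m+2)$, whose cubical boundary is
\[
\partial(\xi_1'\cdot\xi_2')=\xi_1\cdot\xi_2'+(-1)^{m+1}\xi_1'\cdot\xi_2,
\]
both summands closed. Integrate $d\mathcal{R}_{2m+2}$ over the $(m{+}1)$-dimensional algebraic cycle $\xi_1'\cdot\xi_2'$ and apply the cubical Stokes theorem. The left side vanishes, since $d\mathcal{R}_{2m+2}=\pm\IM\bigl(\bigwedge_{i=1}^{2m+2}\tfrac{dz_i}{z_i}\bigr)$ pulls back to $0$ on $\xi_1'\cdot\xi_2'$ for type reasons ($2m+2>m+1$ holomorphic indices). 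The right side is the alternating sum of the face restrictions of $\mathcal{R}_{2m+2}$, each being $\pm\mathcal{R}_{2m+1}$ in the remaining coordinates; they reassemble, up to overall normalisation, into $\langle\xi_1,\xi_2\rangle^-_m$ and $(-1)^{m+1}\langle\xi_1,\xi_2\rangle^+_m$, so comparing signs yields $\langle\xi_1,\xi_2\rangle^+_m=(-1)^m\langle\xi_1,\xi_2\rangle^-_m$, which is (i). For (iv): graded-commutativity of the product on $\CH^\bullet(X,\bullet)$, together with the fact that the cube-coordinate degrees $m+1,m$ of $\xi_1',\xi_2$ have even product, gives $[\xi_1'\cdot\xi_2]=[\xi_2\cdot\xi_1']$, hence $\langle\xi_1,\xi_2\rangle^+_m=\langle\xi_2,\xi_1\rangle^-_m$; applying (i) to $(\xi_2,\xi_1)\in\Xi(d-r+m+1,m,X)$ gives $\langle\xi_2,\xi_1\rangle^-_m=(-1)^m\langle\xi_2,\xi_1\rangle_m$, and therefore $\langle\xi_1,\xi_2\rangle_m=(-1)^m\langle\xi_2,\xi_1\rangle_m$.

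\emph{Main obstacle.} The real work is the current-theoretic core of (i): constructing $\mathcal{R}_n$ on $\square^n$ with the correct restriction-to-faces relation $\mathcal{R}_n|_{z_i=0,\infty}=\pm\mathcal{R}_{n-1}$, and running the cubical Stokes bookkeeping of boundary and face terms cleanly, alongside the repeated appeals to \lemref{LEMA1} needed to make all the intersection products proper. For $m=0$ this collapses precisely to the Weil reciprocity \eqref{E253}--\eqref{E254} behind \propref{PROP001}; the point is to carry its higher analogue through for every $m$.
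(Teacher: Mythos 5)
Your proposal is correct and follows essentially the same route as the paper: choose a bounding precycle $\xi_1'$ (resp.\ $\xi_2'$), form the proper intersection with the other argument, evaluate the Goncharov/KLM regulator current, prove independence of the choice because the regulator kills the product with a class that is rationally equivalent to zero, and derive (i) and (iv) from the boundary identity for $\xi_1'\cdot\xi_2'$ together with the type-reason vanishing of the holomorphic $\Omega$-pieces. The only cosmetic difference is that you define the pairing as the trace of the regulator of the closed cycle $\xi_1'\cdot\xi_2$, while the paper writes out the Deligne cup-product representative $R_{m+1}(\xi_1')\wedge\pi_m(\Omega_m(\xi_2))$ directly; these agree by the product structure on the Deligne complex that the paper itself invokes.
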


\begin{proof} We first recall the definition of real Deligne
cohomology. Let $\D_X^{\bullet}$ be the sheaf of 
complex-valued currents acting on $C^{\infty}$ complex-valued
compactly supported $(2d-\bullet)$-forms, where we recall $\dim X=d$. One has 
a decomposition into Hodge type:
\[
\D_X^{\bullet} = \bigoplus_{p+q=\bullet}\D_X^{p,q},
\]
where $\D_X^{p,q}$ acts on $(d-p,d-q)$ forms, with Hodge
filtration,
\[
F^r\D_X^{\bullet} = \bigoplus_{p+q=\bullet,p\geq r}\D_X^{p,q}.
\]
One has filtered quasi-isomorphism of complexes,
\[
(F^r)\Omega_X^{\bullet} \hookrightarrow  (F^r)\E_X^{\bullet} \hookrightarrow
(F^r)\D_X^{\bullet},
\]
where $\E_X^{\bullet}$ (resp. $\Omega_X^{\bullet}$) is the sheaf complex of germs
of complex-valued $C^{\infty}$ (resp. holomorphic) forms on $X$. Let us put
$\E_{X,\BR}^{\bullet} := $ sheaf complex of germs of real $C^{\infty}$ forms,
and likewise $\D^{\bullet}_{X,\BR}$ the sheaf complex of real-valued currents.
We define $\D_{X,\BR(p)}^{\bullet} = \D_{X,\BR}^{\bullet}\otimes_{\BR}\BR(p)$,
$\E_{X,\BR(p)}^{\bullet} = \E_{X,\BR}^{\bullet}\otimes_{\BR}\BR(p)$. The
global sections of a given sheaf ${\mathcal S}$ over $X$ will be
denoted by ${\mathcal S}(X)$.  Next, for a morphism
of complexes $\lambda : A^{\bullet} \to C^{\bullet}$, we recall the cone complex:
\[
\text{\rm Cone}(A^{\bullet} {\buildrel \lambda\over\longrightarrow} B^{\bullet}) = A^{\bullet}[1]
\oplus B^{\bullet},
\]
with differential
\[
\delta_D : A^{q+1}\oplus B^q \to A^{q+2}\oplus B^{q+1},
\quad (a,b) \ {\buildrel \delta_D\over \mapsto} \ (-da,\lambda(a) + db).
\]

\begin{defn}\label{DEFN2} The real Deligne chomology of $X$ is
given by
\[
H^i_{\D}(X,\BR(j)) := H^i\big(\text{\rm Cone}\big(F^j\D_X^{\bullet}(X) \ {\buildrel
-\pi_{j-1}\over\longrightarrow}\ \D^{\bullet}_{X,\BR(j-1))}(X)\big)[-1]\big).
\]
\end{defn}
We now recall the description of the real regulator
\[
R_{r,m,X} : \CH^r(X,m) \to H_{\D}^{2r-m}(X,\BR(r)),
\]
(see \cite{Go}, as well as \cite{K}, \cite{KLM}).
 For nonzero rational functions $\{f_1,...,f_m\}$
 defined on a complex variety, we introduce the real current $2\pi\sq R_m$, where
 (\cite{Go}):
 \[
 R_m(f_1,...,f_m) := 
 \]
 \[
( 2\pi\sq)^{-m}\Alt_m\sum_{j\geq 0}\frac{1}{(2j+1)!(m-2j-1)!}\log|f_1|
d\log|f_2|\wedge
 \]
 \[
 \cdots \wedge d\log|f_{2j+1}|\wedge d\sq\arg(f_{2j+2})\wedge\cdots\wedge
 d\sq\arg(f_m),
 \]
 and where 
 \[
 \Alt_mF(x_1,...,x_m) := \sum_{\sigma\in S_m}(-1)^{|\sigma|}F(x_{\sigma(1)},...,
 x_{\sigma(m)}),
 \]
 is the alternating operation, and $S_m$ is the group of permutations on
 $m$ letters. Consider $\square^m$ with affine coordinates $(z_1,...,z_m)$,
 and introduce the operators
 \[
 R_{\square} := R_m(z_1,...,z_m),\quad \Omega_{\square} = \bigwedge_{j=1}^md\log z_j,
 \]
For a $\xi\in z^r(X,m)$ we consider the currents on $X$:
\[
R_m(\xi) := \int_{\xi}\pr_{\square}^{\ast}(R_{\square})\wedge \pr_X^{\ast}(-),
\quad \Omega_m(\xi)  := \int_{\xi}\pr_{\square}^{\ast}(\Omega_{\square})\wedge \pr_X^{\ast}(-).
\]
It is easy to check that 
\[
\big(R_m(\xi), \Omega_m(\xi)\big) = (0,0) \ \text{\rm for}\ \xi\in z_{\dgt}^r(X,m),
\]
and that $(2\pi\sq)^m d R_{\square} = \pi_{m-1}(\Omega_{\square})$ as
forms (or as currents acting on forms compactly supported away from the $2^m$ faces
of $\square^m$).
Then up to a twist, $R_{r,m,X}$ is induced by:
\[
\xi\in z^r(X,m) \mapsto \big(\Omega_m(\xi),(2\pi\sq)^m R_m(\xi)\big),
\]
(see \cite{K} or \cite{KLM}, where it follows that 
$(2\pi\sq)^m d R_{m}(\xi) = \pi_{m-1}(\Omega_{m}(\xi))$ if $\partial\xi=0$).
For $m=0$, note that $ (\Omega_0(\xi),(2\pi\sq)^m R_0(\xi)) = (1_{\xi},0)$, where $1_{\xi}$ defines
the current on $X$ given by integration over $\xi$.

\medskip

Now to the proof of the theorem. For simplicity, we will assume given
$(\xi_1,\xi_2)\in \Xi(r,m,X)$. By definition,
this  implies that 
 \[
 \xi_1'\cap\xi_2,\
 \xi_1\cap\xi_2' \in z^{d+m+1}(X,2m+1),
 \]
 which is important in ensuring that the integrals given below converge. 
 The prescription for the following pairings is based on the formalism
 of a cup product operation on Deligne complexes.
 Namely we define:
 \[
 \langle\xi_1,\xi_2\rangle^+_m := (2\pi\sq)\biggl[\int_{\xi_1'\cap\xi_2}R_{m+1}(\xi_1')\wedge \pi_m(\Omega_m
 (\xi_2)) \ +
 \]
 \[
  (-1)^{m+1}\int_{\xi_2\cap \xi_1'}\pi_{m+1}(\Omega_{m+1}
  (\xi_1'))\wedge R_m(\xi_2) \biggr],
 \]
 \[
 = (2\pi\sq)\biggl[\int_{\xi_1'\cap\xi_2}R_{m+1}(\xi_1')\wedge \pi_m(\Omega_m
 (\xi_2))\biggr],
 \]
 using the fact that $\dim |\xi_1\cap \xi_2'| \leq m$ and that $\Omega_{m+1}(\xi_1')$
 is a holomorphic current involving $m+1$ holomorphic differentials. Likewise,
  \[
 \langle\xi_1,\xi_2\rangle^-_m := (2\pi\sq)\biggl[\int_{\xi_1\cap\xi_2'}R_{m}(\xi_1)\wedge \pi_m(\Omega_{m+1} (\xi_2')) \ +
 \]
 \[
  (-1)^{m}\int_{\xi_2'\cap \xi_1}\pi_{m}(\Omega_{m}
  (\xi_1))\wedge R_{m+1}(\xi_2') \biggr],
 \]
 \[
 = (2\pi\sq)(-1)^m\biggl[\int_{\xi_2'\cap \xi_1}\pi_{m}(\Omega_{m}
  (\xi_1))\wedge R_{m+1}(\xi_2') \biggr].
 \]
  The interpretation of these integrals is
 fairly clear.  For instance
 \[
\int_{\xi_1'\cap\xi_2}R_{m+1}(\xi_1')\wedge \pi_m(\Omega_m
 (\xi_2))
 \]
 means the following: In the product
 \[
 X\times X\times \square^{m+1}\times\square^m,
 \]
 let
 \[ 
 \delta : X\times  \square^{m+1}\times\square^m \hookrightarrow 
  X\times X\times \square^{m+1}\times\square^m,
  \]
 be induced from the diagonal embedding $X\hookrightarrow X\times X$,
 together with the identity map on $\square^{m+1}\times\square^m$,
 and $\pr_j$, $j=1,2,3,4,$  the canonical
 projections.
 Then
  \[
\int_{\xi_1'\cap\xi_2}R_{m+1}(\xi_1')\wedge \pi_m(\Omega_m
 (\xi_2)) = \int_{\xi_1'\cap\xi_2}\delta^{\ast}\biggl[
 \pr_3^{\ast}\big(R_{\square^{m+1}}\big)\wedge \pr_4^{\ast}\big( \pi_m(\Omega_{\square^m})
 \big)\biggr].
 \]

  The relationship between $\langle\ ,\  \rangle_m^+$ and
$\langle\ ,\  \rangle_m^-$ is precisely where reciprocity comes into play.
 Observe that 
 \[
 \partial (\xi_1'\cap \xi_2') = \xi_1\cap \xi_2'  +  (-1)^{m+1} \xi_1'\cap \xi_2,
 \]
 and by \cite{B},
 \[
 \xi_1\cap \xi_2'  = (-1)^{m(m+1)} \xi_2' \cap \xi_1 = \xi_2' \cap \xi_1.
 \]
Next,
 \[
  \partial (\xi_1'\cap \xi_2') \mapsto 0\in H_{\mathcal D}^{2d+1}(X,\BR(d+m+1)),
  \]
 and together with the fact that max$\{\dim |\xi_1'\cap \xi_2|, \dim |\xi_1\cap \xi_2'|\} \leq m$,
 and that $ \Omega_{m+1}(\xi_1')\cup  \Omega_{m}(\xi_2)$,  $\Omega_m(\xi_1)
 \cup \Omega_{m+1}(\xi_2')$ are currents involving $2m+1$ holomorphic differentials,
 hence
 \[
 \Omega_{m+1}(\xi_1')\cup  \Omega_{m}(\xi_2)= 0 = \Omega_m(\xi_1)
 \cup \Omega_{m+1}(\xi_2'),
 \]
  we arrive at
 \[
(-1)^m \langle\xi_2,\xi_1\rangle^+_m := (2\pi\sq)\biggl[\int_{\xi_2'\cap\xi_1}R_{m+1}(\xi_2')\wedge \pi_m(\Omega_m(\xi_1)) \biggr]
 \]
 \[ 
 = \langle \xi_1,\xi_2\rangle_m^-.
 \]
 I.e.
 \[
 \langle \xi_1,\xi_2\rangle_m^+ = (-1)^m \langle \xi_2,\xi_1\rangle^+_m = 
 (-1)^m \langle \xi_1,\xi_2\rangle^-_m.
 \]
 The formula for $\langle \xi_1,\xi_2\rangle^{\pm}_m$ above
 essentially involves the product structure for a complex of forms
 defining real Deligne cohomology (see \cite{EV}), translated in the
 terminology of our special class of currents above. Similar ideas
 have been worked out in \cite{L1}. If either $\partial\xi_1' = 0$ or $\partial \xi_2' = 0$,
 then this reduces to a cup product in Deligne cohomology of the Beilinson (real) regulator of 
 a higher Chow cycle, together with one which is nullhomologous (in Deligne cohomology),
 which is zero.
 Hence this pairing does not depend on the choices
 of $\xi_j'$.    The theorem essentially follows from this.
 \end{proof}
 
Now recall that
 \[
\langle\ ,\ \rangle_m := \langle\ ,\ \rangle_m^+\big|_{\Xi(r,m,X)} = 
(-1)^m\langle\ ,\ \rangle_m^-\big|_{\Xi(r,m,X)}.
\]

It is natural to pose the following nondegeneracy type question.

\begin{question}
 Suppose that 
$\langle\xi_1 ,\xi_2 \rangle_m = 0$ for all $\xi_2$ with
$(\xi_1,\xi_2)\in \Xi(r,m,X)$. Is it the case that
$\xi_1\in z_{\dgt}^r(X,m)$? Or if that is too strong,
a possibly weaker statement
could be $\Omega_m(\xi_1) = 0$ (as a current on $X$)?
\end{question}

In the case $m=0$, \propref{PROP002} answers the above question 
definitively in the affirmative.
For our next result, and as a partial answer to the above question,
the reader can consult \cite{El-V}  (p. 187) for the business of
a graph map.

\begin{prop}\label{PROP401}
Let  $\xi\in z^r(X,m)$ be represented as the
graph of a cycle of the form:
\[
\xi_1 = \sum_{\alpha\in I}\big(\{f_{1,\alpha},...,f_{m,\alpha}\},D_{\alpha}\big),
\]
where  $\{f_{1,\alpha},...,f_{m,\alpha}\}\in \BC(D_{\alpha})^{\ast}$, and
where $\{D_{\alpha},\ \alpha\in I\}$ are distinct irreducible 
subvarieties with codim$_XD_{\alpha} = r-m$. Then
\[
\langle\xi_1 ,\xi_2 \rangle_m = 0\ {for\ all}\  \xi_2\  {with}\
(\xi_1,\xi_2)\in \Xi(r,m,X)\ \Rightarrow \Omega_m(\xi_1) = 0.
\]
\end{prop}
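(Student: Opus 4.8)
The plan is to prove the contrapositive: assuming $\Omega_m(\xi_1)\ne 0$ as a current on $X$, I will produce a single $\xi_2$ with $(\xi_1,\xi_2)\in\Xi(r,m,X)$ and $\langle\xi_1,\xi_2\rangle_m\ne 0$. The case $m=0$ is precisely \propref{PROP002} (there $\Omega_0(\xi_1)=1_{\xi_1}$, which vanishes iff $\xi_1=0$), so I may assume $m\ge 1$. The starting observation is that, by the simplification of $\langle\ ,\ \rangle^-_m$ worked out in the proof of \thmref{THMA1}, for $(\xi_1,\xi_2)\in\Xi(r,m,X)$ and any $\xi_2'$ with $\partial\xi_2'=\xi_2$ and $\xi_1\cap\xi_2'$ defined one has
\[
\langle\xi_1,\xi_2\rangle_m=(2\pi\sq)\int_{\xi_1\cap\xi_2'}\pi_m\big(\Omega_m(\xi_1)\big)\wedge R_{m+1}(\xi_2'),
\]
so the pairing depends on $\xi_1$ only through the current $\Omega_m(\xi_1)$. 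For $\xi_1$ of the stated symbolic form, $\Omega_m(\xi_1)=\sum_{\alpha\in I}(\iota_\alpha)_*\omega_\alpha$ with $\iota_\alpha:D_\alpha\hookrightarrow X$ and $\omega_\alpha:=d\log f_{1,\alpha}\wedge\cdots\wedge d\log f_{m,\alpha}$ the logarithmic $m$-form on $D_\alpha$.

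First I would reduce to one component. Since the $D_\alpha$ are distinct, irreducible, and of the common dimension $d-r+m$, the generic point of any one lies on no other; testing $\Omega_m(\xi_1)$ against a form supported near such a point of $D_{\alpha_0}$ shows $\Omega_m(\xi_1)\ne 0$ forces $\omega_{\alpha_0}\not\equiv 0$ for some $\alpha_0$. Fix a general $x_0\in D_{\alpha_0}$ at which $\omega_{\alpha_0}$ is nonzero — equivalently at which $df_{1,\alpha_0},\dots,df_{m,\alpha_0}$ are linearly independent — and which avoids the singular locus of $D_{\alpha_0}$, the other $D_\alpha$, and the zeros and poles of the $f_{j,\alpha_0}$. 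Since $\pi_m$ is injective on forms of Hodge type $(m,0)$, also $\pi_m(\omega_{\alpha_0})\not\equiv 0$.

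Next I would build $\xi_2$. Let $V\subset X$ be a general $r$-dimensional subvariety through $x_0$; then $V$ meets every $D_\alpha$ properly, $Z:=V\cap D_{\alpha_0}$ is an $m$-fold smooth at $x_0$, and $\omega_{\alpha_0}\big|_Z\not\equiv 0$. Choose $g_1,\dots,g_m\in\BC(V)^*$ whose restrictions to $Z$ are $f_{1,\alpha_0}\big|_Z,\dots,f_{m,\alpha_0}\big|_Z$, choose $g_{m+1}=c$ a constant with $|c|\ne 1$, let $\xi_2'$ be the graph cycle of $\{g_1,\dots,g_{m+1}\}$ over $V$ (an element of $z^{d-r+m+1}(X,m+1)$ for a general such choice), and put $\xi_2:=\partial\xi_2'$. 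For general data the conditions defining $\Xi^{\pm}(r,m,X)$ all hold: the supports meet in the expected dimensions $\le m$, a suitable $\xi_1'$ with $\partial\xi_1'=\xi_1$ exists by Bloch's moving lemma (cf.\ \cite{B}), and $|\xi_1|\cap|\xi_2|=\emptyset$ since over the $(m-1)$-dimensional loci $\DIV(g_i)\cap Z$ the $\square$-coordinates of $\xi_1$ and of $\xi_2$ do not coincide for a general choice. Substituting into the displayed formula and restricting to the diagonal, the contribution of $D_{\alpha_0}$ is governed by the identity $R_{m+1}(g_1,\dots,g_m,c)=\log|c|\cdot K(g_1,\dots,g_m)$, where $K$ is a universal combination of $d\log|g_j|$'s and $d\sq\arg(g_j)$'s whose component of Hodge type $(0,m)$ equals a nonzero universal constant times $\overline{d\log g_1}\wedge\cdots\wedge\overline{d\log g_m}$ (and whose $(m,0)$-component is the analogous multiple of $d\log g_1\wedge\cdots\wedge d\log g_m$). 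Since $g_i\big|_Z=f_{i,\alpha_0}\big|_Z$, this contribution comes out to a nonzero real multiple of
\[
\log|c|\int_Z\omega_{\alpha_0}\wedge\overline{\omega_{\alpha_0}},
\]
whose integrand is a semipositive volume form that is strictly positive near $x_0$; hence this contribution is nonzero.

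The remaining, and hardest, step is to exclude cancellation: with the same $\xi_2$, every other component $D_\alpha$ with $\omega_\alpha\not\equiv 0$ contributes a term $\log|c|\int_{V\cap D_\alpha}\pi_m(\omega_\alpha)\wedge K(g_1,\dots,g_m)$, and a priori the finitely many such terms could sum against the $D_{\alpha_0}$-term to zero. I would settle this by a genericity-and-density argument in the spirit of \propref{PROP002}: letting $V$ (and, if needed, $g_{m+1}$) vary in an algebraic family, the total pairing is a nontrivial real-analytic function of the parameters — the $D_{\alpha_0}$-term alone varies nontrivially, and not in a manner forced to track the remaining terms, because the $D_\alpha$ are distinct, so $V$ may be chosen to concentrate the $\omega_{\alpha_0}$-mass of $Z$ near $x_0$ while keeping $V\cap D_\alpha$ ($\alpha\ne\alpha_0$) away from $x_0$ — hence it is nonzero for a general member of the family. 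This genericity and intersection-theoretic bookkeeping is where the real work lies; the rest is the $m=0$ argument of \propref{PROP002} with $m$ extra $\square$-coordinates carried along.
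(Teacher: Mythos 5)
Your overall strategy matches the paper's: argue the contrapositive, locate a component $D_{\alpha_0}$ on which the logarithmic form $\omega_{\alpha_0}$ is nonzero, and test against a cycle of the form $\xi_2=\partial\xi_2'$ with $\xi_2'$ the graph of $(\{g_1,\dots,g_{m+1}\},E)$ over a general subvariety $E$ of complementary codimension $d-r$, using the explicit current formula for $\langle\ ,\ \rangle_m^-$. Your idea of taking $g_i\big|_{E\cap D_{\alpha_0}}=f_{i,\alpha_0}\big|_{E\cap D_{\alpha_0}}$ and $g_{m+1}$ a constant $c$ with $|c|\ne 1$, so that the single-component contribution becomes a positive multiple of $\log|c|\int_Z\omega_{\alpha_0}\wedge\overline{\omega_{\alpha_0}}$, is an attractive and more explicit way to carry out the final nonvanishing step that the paper leaves ``to the reader'' (though you should still verify that the universal constant in the $(0,m)$-component of your $K(g_1,\dots,g_m)$ is nonzero, and that such a special, non-generic choice of the $g_i$ is compatible with the properness conditions defining $\Xi(r,m,X)$).

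The genuine gap is exactly the step you flag as ``the hardest'': ruling out cancellation among the contributions of the different components $D_\alpha$. Your proposed resolution --- vary $V$ in a family and assert that the total pairing is a nontrivial real-analytic function of the parameters because the $D_{\alpha_0}$-term is ``not forced to track'' the others --- is not an argument; it is a restatement of what needs to be proved. The paper disposes of this issue with a concrete device you are missing: since the $D_\alpha$ are distinct irreducible subvarieties, one may choose $g_1\in\BC(E)^*$ with $g_1\big|_{D_\alpha}\equiv 1$ for all $\alpha\ne\alpha_0$. Every term of the alternating sum defining $R_{m+1}(g_1,\dots,g_{m+1})$ contains $g_1$ through one of $\log|g_1|$, $d\log|g_1|$, or $d\arg(g_1)$, all of which vanish identically on $D_\alpha\cap E$ for $\alpha\ne\alpha_0$; hence those components contribute literally zero, not merely something to be estimated. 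To recover the freedom to choose $g_1$ arbitrarily on $D_{\alpha_0}$ (which your construction needs, since you want $g_1\big|_Z=f_{1,\alpha_0}\big|_Z$), the paper then deforms $g_1$ to
\[
g_{1,\varepsilon}:=1+(h-1)\Bigl(\frac{g_1-1}{(g_1-1)+\varepsilon}\Bigr),
\]
which still equals $1$ on the other components but tends to an arbitrary prescribed $h$ as $\varepsilon\to 0^+$. Incorporating this localization step would close the gap and let your positivity argument finish the proof.
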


\begin{proof} Let us assume that  $\Omega_m(\xi_1) \ne 0$. This
is equivalent to saying
$\pi_m(\Omega_m(\xi_1)) \ne 0$.
Then for some $0\in I$, 
\[
\int_{D_0}\pi_m\biggl[ \bigwedge_1^m d\log f_{0,j}\biggr]\wedge (-)  \ne 0.
\]
Now let $E\subset X$ be a general choice of irreducible subvariety
with codim$_XE = d-r$, and let $\{g_1,...,g_{m+1}\}\in \BC(E)^{\ast}$. By distinctness
of the $\{D_{\alpha},\ \alpha\in I\}$, we can choose $g_1$ such that $g_1\big|_{D_{\alpha}} \equiv 1$
for all $\alpha\ne 0$. If we put $\xi_2'$ to correspond to the graph of
$\big(\{g_1,...,g_{m+1},E\big)$, then using the dictionary
\[
\big(\{f_{0,1},...,f_{0,m},D_0\big) \leftrightarrow \big(\{f_1,...,f_m\},D\big),
\]
we have

\begin{equation}\label{E100}
(2\pi\sq)^{-1}\langle \xi_1,\xi_2\rangle_m = \pm\int_{D\cap E}\pi_m(\Omega_m(f_1,...,f_m))\wedge
R_{m+1}(g_1,...,g_{m+1}).
\end{equation}

Next, by replacing $g_1$ by 
\[
g_{1,\varepsilon} := 1 \ +\  (h-1) \biggl(\frac{(g_1-1)}{(g_1-1) + \varepsilon}\biggr),
\]
where $h$ is {\it any} rational function and $\varepsilon > 0$, and letting $\varepsilon \mapsto 0^+$, it follows that we can reduce to the situation of an {\it arbitrary} choice
of $\{g_1,...,g_{m+1}\}$ in equation (\ref{E100}).  In particular, if we consider
\[
F := (f_1,...,f_m) : D \cap E \twoheadrightarrow \big(\P^1\big)^{\times m},
\]
then 
\[
\pi_m(\Omega_m(f_1,...,f_m) = F^{\ast}\big(\pi_m(\Omega_m(z_1,...,z_m)\big).
\]
Thus  equation (\ref{E100}) and the non-triviality of $\langle \xi_1,\xi_2\rangle_m$
reduces to showing the non-triviality of
\[
\int_{\BC^m}\pi_m(\Omega_m(z_1,...,z_m))\wedge
R_{m+1}(w_1,...,w_{m+1}),
\]
for a choice of functions $\{w_1,...,w_{m+1}\}$ of $z := (z_1,...,z_m)$. That
one can find such functions is not difficult, and left
to the reader.
\end{proof}

 It is instructive to work out a couple example cases.
 
 \bigskip
 \noindent
 \underbar{Case $m=0$}: 
 We recover the pairing in \ref{SS002}.
 We first recall the following key result.

\begin{lem}\label{LEMA1} (\cite{B}, Lemma (4.2)) Let $Y$ be a smooth,
quasi-projective $k$-variety and let $y = \{Y_i\}$ be
a finite collection of closed subvarieties. Then the inclusion
$z^{\bullet}_y(Y,\bullet) \subset z^{\bullet}(Y,\bullet)$ is a quasi-isomorphism.
\end{lem}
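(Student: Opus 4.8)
The statement is Bloch's moving lemma, and the plan is to follow the standard ``moving by a group action'' strategy: to construct, on the full cycle complex $z^{\bullet}(Y,\bullet)$, a morphism of complexes $\psi$ that is chain-homotopic to the identity and whose image lies in the good-position subcomplex $z^{\bullet}_y(Y,\bullet)$ (i.e. the subcomplex of cycles meeting every $Y_i$, and every face-restriction thereof, properly), with the homotopy operator $H$ restricting to the subcomplex. Granting such data, the conclusion is formal homological algebra. Writing $\iota : z^{\bullet}_y(Y,\bullet)\hookrightarrow z^{\bullet}(Y,\bullet)$ and factoring $\psi = \iota\circ\phi$ through the subcomplex, the homotopy $\psi\simeq\mathrm{id}$ gives $\iota\circ\phi\simeq\mathrm{id}$, so $\iota_*$ is surjective on homology; and because $H$ preserves $z^{\bullet}_y(Y,\bullet)$ one also obtains $\phi\circ\iota\simeq\mathrm{id}$ on the subcomplex, which gives injectivity of $\iota_*$. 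Hence $\iota$ is a quasi-isomorphism.

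First I would reduce to a workable geometric setting. After embedding $Y$ in projective space, the problem becomes moving a cycle $Z\in z^r(Y,m)$ (a codimension-$r$ cycle on $Y\times\square^m$) so that $Z$ and all of its restrictions to the faces of $\square^m$ meet each $Y_i\times\square^{m'}$, and the faces of these, properly. Since proper intersection is an open/generic condition, the core task is to exhibit a connected rational parameter variety $G$ with a distinguished point $e$, together with a family of self-correspondences of $Y$ over $G$ specializing to the identity at $e$, such that for $g$ in a dense open subset of $G$ the induced push-pull $\psi_g(Z)$ is in good position relative to all the $Y_i$ \emph{and} all faces simultaneously.

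The mechanism for the move is a Kleiman--Bertini type transversality: for a connected group $G$ acting on a smooth homogeneous variety $V$, the translate $gA$ meets a fixed cycle $B$ properly for generic $g\in G$. The homotopy is then geometric: the family of translates over a rational arc joining $g$ to $e$, recorded in the extra cube factor $\square^1 = \P^1\setminus\{1\}$, defines a cycle in $z^r(Y,m+1)$ whose boundary is $\psi(Z)-Z$ up to face terms, which is exactly the chain homotopy $H$. The genuine difficulty — and the point where Bloch's original argument required repair — is that $Y$ is not homogeneous, so there is no transitive group action on $Y$ itself. I would resolve this as in Bloch's corrected proof: take translations in an ambient affine/projective space and transport them back to $Y$ through the graph of a generic family of maps, which forces the use of a higher-dimensional parameter space $G$ together with a spreading-out argument guaranteeing that a single generic $g$ simultaneously imposes proper intersection against the finite list $\{Y_i\}$ and against every face of the cube.

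The main obstacle is precisely this compatibility bookkeeping: verifying that the correspondence-induced operator $\psi$ is a well-defined chain map on $z^{\bullet}(Y,\bullet)$ (respecting $\partial$ and the meeting-all-faces condition), that it is chain-homotopic to the identity via the explicit degree-raising operator $H$, and that both $\psi$ and $H$ restrict to $z^{\bullet}_y(Y,\bullet)$. A secondary obstacle is the passage from projective to quasi-projective $Y$: non-properness must be handled by a localization/limit argument, or by running the construction on a projective closure and controlling the contribution of the boundary, so that the generic-transversality and homotopy constructions survive removal of the complement. Once these pieces are assembled, the quasi-isomorphism follows formally from the argument in the first paragraph.
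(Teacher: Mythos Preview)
The paper does not prove this lemma at all; it is merely quoted from Bloch's paper \cite{B} (and its correction) as an input to the Appendix. There is therefore no ``paper's own proof'' to compare your proposal against.

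That said, your outline is a faithful high-level summary of Bloch's moving-lemma argument: the homotopy-by-group-action scheme, the Kleiman--Bertini transversality on an ambient homogeneous space, the graph-correspondence workaround for the non-homogeneity of $Y$, and the face-compatibility bookkeeping are exactly the ingredients. Your identification of the delicate point --- that the original argument needed repair because $Y$ is not itself homogeneous, and that the fix requires a higher-dimensional parameter space with careful control of the face conditions --- is correct and is precisely what Bloch's second paper (also cited under \cite{B}) addresses. As a proof \emph{sketch} this is fine; as a proof it would need the actual construction of the correspondence and verification that $\psi$ and $H$ respect the simplicial/cubical structure, which is where all the work lies.
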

\noindent
Here $z^{\bullet}_y(Y,\bullet) \subset z^{\bullet}(Y,\bullet)$ is
the subcomplex of cycles that meet $y\times \square^{\bullet}$
properly.
In the case $m=0$, we have $\Xi^0(r,0,X) \subset z^r_{\rat}(X)\times
z^{d-r+1}_{\rat}(X)$. Let $(\xi_1,\xi_2)\in  z^r_{\rat}(X)\times
z^{d-r+1}_{\rat}(X)$. By considering the cases where $y = |\xi_j|$, $j=1,2$,
it follows that
\[
\Xi^0(r,0,X) = \Xi^+(r,0,X) = \Xi^-(r,0,X) = \Xi(r,0,X),
\]
and thus we have a pairing
\[
\langle \ ,\ \rangle := \langle\ ,\ \rangle_0 : z^r_{\rat}(X)\times z_{\rat}^{d-r+1}(X) \to
\BR,
\]
defined for all pairs $(\xi_1,\xi_2)$ where $|\xi_1|\cap |\xi_2| = \emptyset$.
Let $\xi_1 := \DIV(f,D) \in z^r_{\rat}(X,0)$,
 $\xi_2 := \DIV(g,E)\in z_{\rat}^{d-r+1}(X,0)$ be given. In this
 case $D$ and $E$ are irreducible subvarieties of $X$
 of codim$_XD = r-1$ and codim$_XE = d-r$, and $f\in \BC(D)^{\ast}$,
 $g\in \BC(E)^{\ast}$. Then
 \[
 \langle\xi_1,\xi_2\rangle_0 := \int_{D\cap \xi_2}\log|f| =  \int_{E\cap \xi_1}\log|g|
 =:   \langle \xi_2,\xi_1\rangle_0.
 \]
The reader can check that \propref{PROP002} can be deduced from
\propref{PROP401}. 

\bigskip
 \noindent
 \underbar{Case $m=1$}: Let 
 \[
 \xi_1 := T(\{f_1,f_2\},D) \in z_{\rat}^r(X,1),
 \]
 and  
 \[
 \xi_2 := T(\{g_1,g_2\},E)\in z_{\rat}^{d-r+2}(X,1)
 \]
  be given, where
 $T$ is the Tame symbol. In this
 case $D$ and $E$ are irreducible subvarieties of $X$
 of codim$_XD = r-2$ and codim$_XE = d-r$, and $f_j\in \BC(D)^{\ast}$,
 $g_j\in \BC(E)^{\ast}$, $j=1,2$. Note that
 \[
 \xi_1 :=
 \sum_{\cd_DD_i=1}\biggl((-1)^{\nu_{D_i}(f_1)\nu_{D_i}(f_2)}
 \frac{f_1^{\nu_{D_i}(f_2)}}{f_2^{\nu_{D_i}(f_1)}},D_i\biggr),
 \]
  \[
 \xi_2 :=
 \sum_{\cd_EE_j=1}\biggl((-1)^{\nu_{E_j}(g_1)\nu_{E_j}(g_2)}
 \frac{g_1^{\nu_{E_j}(g_2)}}{g_2^{\nu_{E_j}(g_1)}},E_j\biggr).
 \]
 Then 
 $2\pi  \langle \xi_1,\xi_2\rangle_1 =$
 \[
\sum_j
\int_{D\cap E_j}\big[\log|f_1|d\arg(f_2) - \log|f_2|d\arg(f_1)\big]
 \wedge \pi_1\biggl[d\log \frac{g_1^{\nu_{E_j}(g_2)}}{g_2^{\nu_{E_j}(g_1)}}
 \biggr].
  \]

\end{document}